\definecolor{CherryRed}{rgb}{0.7,0.1,0.1}
\definecolor{EgyptBlue}{rgb}{0.1,0.1,0.6}
\numberwithin{equation}{section}
\newtheorem{thm}{Theorem}[section]
  \theoremstyle{plain}
  \newtheorem{lem}[thm]{Lemma}
  \theoremstyle{plain}
  \newtheorem{prop}[thm]{Proposition}
  \theoremstyle{plain}
  \newtheorem{cor}[thm]{Corollary}
  \theoremstyle{plain}
    \theoremstyle{definition}
\newtheorem{rem}[thm]{Remark}
\newcommand{\R}{{\mathbb R}}
\newcommand{\N}{{\mathbb N}}
\newcommand{\Z}{{\mathbb Z}}
\title{On multiplicity of eigenvalues and symmetry of eigenfunctions of the $p$-Laplacian}
\author[B.~Audoux]{Benjamin Audoux}
\author[V.~Bobkov]{Vladimir Bobkov}
\author[E.~Parini]{Enea Parini}
\address[B.~Audoux, E.~Parini]{Aix Marseille Univ, CNRS, Centrale Marseille, I2M, 39 Rue Frederic Joliot Curie, 13453 Marseille, France}
\email{benjamin.audoux@univ-amu.fr} \email{enea.parini@univ-amu.fr}
\address[V.~Bobkov]{University of West Bohemia, Faculty of Applied Sciences, Department of Mathematics and NTIS, Univerzitn\'i 8, 306 14 Plze\v{n}, Czech Republic}
\email{bobkov@kma.zcu.cz}
\thanks{}
\subjclass[2010]{
35J92;  	
35P30;  	
35A15;  	
35A16;  	
55M25;  	
35B06;  	
}
\keywords{$p$-Laplacian; nonlinear eigenvalues; Krasnoselskii genus; symmetry; multiplicity; degree of map}
\begin{document}

\begin{abstract}
	We investigate multiplicity and symmetry properties of higher eigenvalues and eigenfunctions of the $p$-Laplacian under homogeneous Dirichlet boundary conditions on certain symmetric domains $\Omega \subset \R^N$. By means of topological arguments, we show how symmetries of $\Omega$ help to construct subsets of $W_0^{1,p}(\Omega)$ with suitably high Krasnosel'ski\u{\i} genus. In particular, if $\Omega$ is a ball $B \subset \mathbb{R}^N$, we obtain the following chain of inequalities:
	$$
	\lambda_2(p;B) \leq \dots \leq \lambda_{N+1}(p;B) \leq \lambda_\ominus(p;B).
	$$
	Here $\lambda_i(p;B)$ are variational eigenvalues of the $p$-Laplacian on $B$, and 	
	$\lambda_\ominus(p;B)$ is the eigenvalue which has an associated eigenfunction whose nodal set is an equatorial section of $B$. If $\lambda_2(p;B)=\lambda_\ominus(p;B)$, as it holds true for $p=2$, the result implies that the multiplicity of the second eigenvalue is at least $N$. In the case $N=2$, we can deduce that any third eigenfunction of the $p$-Laplacian on a disc is nonradial. 
	The case of other symmetric domains and the limit cases $p=1$, $p=\infty$ are also considered.
\end{abstract}

\maketitle

\section{Introduction and main results}\label{sec:inro}

Let $\Omega \subset \R^N$, $N \geq 2$, be a bounded, open domain, and let $p>1$. We say that $u \in W^{1,p}_0(\Omega) \setminus \{0\}$ is an eigenfunction of the $p$-Laplacian associated to the eigenvalue $\lambda \in \mathbb{R}$ if it is a weak solution of
\begin{equation}\label{eq:D}
 \left\{
 \begin{array}{r c l l} -\Delta_p u & = & \lambda |u|^{p-2}u & \text{in }\Omega, \\ u & = & 0 & \text{on }\partial \Omega,
 \end{array}
 \right.
\end{equation}
where $\Delta_p u = \text{div}(|\nabla u|^{p-2}\nabla u)$. If $p=2$, \eqref{eq:D} is the well-known eigenvalue problem for the Laplace operator. The first eigenvalue $\lambda_1(p;\Omega)$ of the $p$-Laplacian is defined as
\begin{equation}\label{firsteigenvalue}
\lambda_1(p;\Omega) = \min_{u \in \mathcal{S}_p} \int_\Omega |\nabla u|^p \, dx,
\end{equation}
where
$$ 
\mathcal{S}_p := \{u \in W^{1,p}_0(\Omega)\,\big|\, \|u\|_{L^p(\Omega)} = 1\}.
$$
Besides the first eigenvalue, in the linear case $p=2$, the standard Courant-Fisher minimax formula
\begin{equation}\label{eq:eigenvalue_linear}
\lambda_{k}(2;\Omega) = \min_{X_k} \max_{u \in X_k \cap \mathcal{S}_2} \int_\Omega |\nabla u|^2 \, dx, 
\quad k \in \mathbb{N},
\end{equation}
provides a sequence of eigenvalues which exhausts the spectrum of the Laplacian, cf.\ \cite[Theorem 8.4.2]{attouchbuttazzo}. 
In \eqref{eq:eigenvalue_linear}, the minimum is taken over subspaces $X_k \subset W_0^{1,2}(\Omega)$ of dimension $k$. 
However, for $p\neq 2$ the problem is nonlinear, and it is necessary to make use of a different method. A sequence of \textit{variational eigenvalues} can be obtained by means of the following minimax variational principle. 
Let $\mathcal{A} \subset W^{1,p}_0(\Omega)$ be a \textit{symmetric set}, i.e., if $u \in \mathcal{A}$, then $-u \in \mathcal{A}$. Define the {\it Krasnosel'ski\u{\i} genus} of $\mathcal{A}$ as
$$ 
\gamma(\mathcal{A}):=\inf\{k\in\mathbb{N} \,\big|\, \exists \mbox{ a continuous odd map } f:\mathcal{A} \to S^{k-1}\}
$$
with the convention $\gamma(\mathcal{A}) = +\infty$ if, for every $k \in \mathbb{N}$, no continuous odd map $f:\mathcal{A} \to S^{k-1}$ exists.
Here $S^{k-1}$ is a $(k-1)$-dimensional sphere.
For $k \in \mathbb{N}$ we define
$$ 
\Gamma_k(p) := \left\{\mathcal{A} \subset \mathcal{S}_p \,\big|\,  \mathcal{A} \mbox{ symmetric and compact},\, \gamma(\mathcal{A})\geq k\right\}
$$
and
\begin{equation}\label{highereigenvalues} 
\lambda_k(p;\Omega) := \inf_{\mathcal{A} \in \Gamma_k(p)} \max_{u \in \mathcal{A}} \int_\Omega |\nabla u|^p \, dx. 
\end{equation}
It is known that each $\lambda_k(p;\Omega)$ is an eigenvalue and
\begin{equation*}
0 < \lambda_1(p;\Omega) < \lambda_2(p;\Omega) \leq \dots \leq \lambda_k(p;\Omega) \to +\infty
\quad \text{as } k \to +\infty,
\end{equation*}
see \cite[\S 5]{garciaazoreroperal}. 
However, it is not known if the sequence $\{\lambda_k(p;\Omega)\}_{k=1}^{+\infty}$ exhausts all possible eigenvalues, except for the case $p=2$, where the eigenvalues in \eqref{highereigenvalues} coincide with the eigenvalues in \eqref{eq:eigenvalue_linear}, see, e.g., \cite[Proposition 4.7]{cuesta} or \cite[Appendix~A]{brascoparinisquassina}. 
It has to be observed that the definitions of $\lambda_1(p;\Omega)$ by \eqref{firsteigenvalue} and \eqref{highereigenvalues} are consistent. 
The associated first eigenfunction is unique modulo scaling and has a strict sign in $\Omega$ (cf.~\cite{bellonikawohl,vazquez}), while eigenfunctions associated to any other eigenvalue must necessarily be sign-changing (see, e.g., \cite[Lemma~2.1]{kawohllindqvist}). 
Therefore, it makes sense to define the \textit{nodal domains} of an eigenfunction $u$ as the connected components of the set $\{x \in \Omega: u(x) \neq 0\}$, and the \textit{nodal set} of $u$ as $\{x \in \Omega: u(x) = 0\}$.
The version of the Courant nodal domain theorem for the $p$-Laplacian obtained in \cite{drabekrobinson} states that any eigenfunction associated to $\lambda_k(p;\Omega)$ with $k \geq 2$ has at most $2k-2$ nodal domains.
In particular, any eigenfunction associated to $\lambda_2(p;\Omega)$ has exactly two nodal domains.
Moreover, since there are no eigenvalues between $\lambda_1(p;\Omega)$ and $\lambda_2(p;\Omega)$ \cite{anane}, the latter is indeed the second eigenvalue. 

\medskip

For the sake of simplicity, in the following we will restrict our attention mainly to the case where $\Omega = B^N$ is an open $N$-ball centred at the origin. In the linear case $p=2$, the eigenfunctions of the Laplace operator on $B^N$ are given explicitly by means of Bessel functions and spherical harmonics, and therefore it can be seen that the first eigenfunction is radially symmetric, while the nodal set of any second eigenfunction is an equatorial section of the ball; moreover, the following multiplicity result holds true:
\begin{equation}\label{eq:chain_for_linear_case}
\lambda_1(2;B^N) < \lambda_2(2;B^N) = \dots = \lambda_{N+1}(2;B^N) < \lambda_{N+2}(2;B^N),
\end{equation}
see, for instance, the discussion in \cite{helffer}.
In contrast, in the nonlinear case $p \neq 2$ much less is known. While it is relatively easy to show that the first eigenfunction is still radially symmetric by means of Schwarz symmetrization, symmetry properties of second eigenfunctions, as well as the multiplicity of the second eigenvalue, are not yet completely understood. For instance, it is known only that second eigenfunctions can not be radially symmetric; this was shown in the planar case in \cite{parini} for $p$ close to $1$, and later in \cite{benediktdrabekgirg} for general $p > 1$. The result was finally generalized to any dimension in \cite{anoopdrabeksasi}. The notion of multiplicity itself needs to be clarified in the nonlinear case. We say that the variational eigenvalue $\lambda_{k}(p;\Omega)$ has multiplicity $m$ if there exist $m$ variational eigenvalues $\lambda_{l}, \dots, \lambda_{l+m-1}$ with $l \leq k \leq l+m-1$ such that
\begin{equation}\label{eq:multiplicity}
\lambda_{l-1}(p;\Omega) < \lambda_{l}(p;\Omega) = \dots = \lambda_{k}(p;\Omega) = \dots = 
\lambda_{l+m-1}(p;\Omega) < \lambda_{l+m}(p;\Omega).
\end{equation}
We point out that we are not aware of any multiplicity results for higher eigenvalues of the $p$-Laplacian. 

\medskip

Despite the deficit of information about symmetry properties of variational eigenfunctions, it is possible to consider eigenvalues (possibly non-variational) with associated eigenfunctions which respect certain symmetries of $B^N$. For instance, the existence of a sequence of eigenvalues
$$ 
0 < \mu_1(p;B^N) < \mu_2(p;B^N) < \dots < \mu_k(p;B^N) \to +\infty
\quad \text{as } k \to +\infty,
$$
corresponding to \emph{radial eigenfunctions} has been shown, for instance, in \cite{delpinomanasevich}. Each radial eigenfunction associated to $\mu_k(p;B^N)$ is unique modulo scaling and possesses exactly $k$ nodal domains. The latter implies that $\lambda_k(p;B^N) \leq \mu_k(p;B^N)$ for any $k \in \mathbb{N}$ and $p>1$ (see Lemma~\ref{lem:Krasnoselskii_by_scaling} below). The above-mentioned results about radial properties of first and second eigenfunctions, together with \cite[Theorem~1.1]{bobkovdrabek}, can therefore be stated as 
$$ 
\lambda_1(p;B^N) = \mu_1(p;B^N) 
\quad 
\text{and} 
\quad \lambda_k(p;B^N)<\mu_k(p;B^N)
\quad \text{for all } p>1 \text{ and } k \geq 2.
$$
Another sequence of eigenvalues
$$ 
0 < \tau_1(p;B^N) < \tau_2(p;B^N) < \dots < \tau_k(p;B^N) \to +\infty
\quad \text{as } k \to +\infty,
$$
was considered in \cite[Theorem~1.2]{anoopdrabeksasi}. Here $\tau_k(p;B^N)$ is constructed in such a way that it has an associated \textit{symmetric eigenfunction}\footnote{\label{footref1}
We use the adjective ``symmetric'' to distinguish this eigenfunction from the radial one, since $\mu_k(p;B^N)$ and $\tau_k(p;B^N)$ can be equal to each other and hence might have associated eigenfunctions with not appropriate nodal structures, see \cite[Corollary~1.3 and Theorem~1.4]{bobkovdrabek}.} whose nodal domains are spherical wedges of angle $\frac{\pi}{k}$; see also Section~\ref{sec:Eigenvalue_auxiliary_facts} below, where a generalization of this sequence to other symmetric domains is given. 
In particular, the nodal set of any symmetric eigenfunction associated to $\tau_1(p;B^N)$ is an equatorial section of $B^N$.
By construction, a symmetric eigenfunction associated to $\tau_k(p;B^N)$ has $2k$ nodal domains, which implies that
$$
\lambda_{2k}(p;B^N) \leq \tau_k(p;B^N)
\quad 
\text{for any } k \in \mathbb{N} \text{ and } p > 1.
$$
At the same time, in the linear case, one can easily use the Courant-Fisher variational principle \eqref{eq:eigenvalue_linear} to show (see Remark~\ref{rem:2k+1_linear} below) that at least
\begin{equation}\label{eq:l2k<tk}
\lambda_{2k}(2;B^N) \leq \lambda_{2k+1}(2;B^N) \leq \tau_k(2;B^N)
\quad 
\text{for any } k \in \mathbb{N}.
\end{equation} 

The generalization of even such simple facts as \eqref{eq:chain_for_linear_case} and \eqref{eq:l2k<tk} to the nonlinear case $p \neq 2$ meets certain difficulties. 
The main obstruction consists in the following fairly common problem: 
\medskip
\begin{center}
	\textit{How to obtain a symmetric compact set $A \subset \mathcal{S}_p$ with suitably high Krasnosel'ski\u{\i} genus, and, at the same time, with suitably low value $\max\limits_{u \in A} \int_\Omega |\nabla u|^p \, dx$?}
\end{center}
\medskip
In the linear case, the consideration of subspaces spanned by the first $k$ eigenfunctions $\varphi_1, \dots, \varphi_k$ directly solves this problem. 
Let us sketchily describe the approach supposing that we want to prove the multiplicity in \eqref{eq:chain_for_linear_case} using the definition \eqref{highereigenvalues} only. 
Let $\varphi_1$ and $\varphi_2$ be a first and a second eigenfunction of the Laplacian on $B^N$, respectively, such that $\|\varphi_i\|_{L^2(B^N)} = 1$ for $i=1,2$. Since $B^N$ and the Laplace operator are rotation invariant, we see that $\varphi_2$ generates $N$ linearly independent second eigenfunctions $\varphi_2, \dots, \varphi_{N+1}$ whose nodal sets are equatorial sections of $B^N$ orthogonal to each other.
Consider the set
\begin{equation}\label{eq:A_k}
\mathcal{B}_2 := 
\bigg\{
\sum_{i=1}^{N+1} \alpha_i \varphi_i\,\big|\, \sum_{i=1}^{N+1} |\alpha_i|^2 = 1
\biggr\}.
\end{equation}
Evidently, $\mathcal{B}_2$ is symmetric and compact, and it is not hard to show that $\gamma(\mathcal{B}_2) = N+1$. Moreover, since all $\varphi_1, \dots, \varphi_{N+1}$ are mutually orthogonal with respect to $L^2$-inner product, we get $\mathcal{B}_2 \subset \mathcal{S}_2$. Indeed, 
\begin{equation}\label{eq:orthogonaliry}
\|u\|_{L^2(B^N)}^2 = \sum_{i=1}^{N+1} \alpha_i^2 \, \|\varphi_i\|_{L^2(B^N)}^2 = 1
\quad \text{for any } u \in \mathcal{B}_2.
\end{equation}
Therefore, $\mathcal{B}_2 \in \Gamma_{N+1}(2)$, and, using again the orthogonality, we obtain
\begin{equation*}
\lambda_{N+1}(2;B^N) \leq \max_{u \in \mathcal{B}_2} \int_{B^N} |\nabla u|^2 \, dx \leq 
\max_{\alpha_1^2 + \dots + \alpha_{N+1}^2 = 1}\sum_{i=1}^{N+1} \alpha_i^2 \, \lambda_2(2;B^N) \|\varphi_i\|_{L^2(B^N)}^2 = \lambda_2(2;B^N),
\end{equation*}
which leads to the desired chain of equalities in \eqref{eq:chain_for_linear_case}. 

However, this approach does not work well enough in the nonlinear case $p \neq 2$. 
First of all, we do not know if a second eigenfunction has an equatorial section of $B^N$ as its nodal set. This can be overcome by considering a symmetric eigenfunction $\Psi_1$ associated to $\tau_1(p;B^N)$. Using the first eigenfunction $\varphi_1$, symmetric eigenfunction $\Psi_1$, and noting that the $p$-Laplacian is rotation invariant for $p>1$, we can produce $(N+1)$ linearly independent eigenfunctions as above and define a symmetric compact set $\mathcal{B}_p$ analogously to \eqref{eq:A_k}. 
Moreover, similarly to \cite[Lemma~2.1]{huang} it can be shown that $\gamma(\mathcal{B}_p) = N+1$. However, the lack of the $L^2$-orthogonality prevents to achieve $\mathcal{B}_p \subset \mathcal{S}_p$ as in \eqref{eq:orthogonaliry}, and further normalization of $\mathcal{B}_p$ increases the value $\max\limits_{u \in \mathcal{B}_p} \int_{B^N} |\nabla u|^p \, dx$.\footnote{A similar approach was used in \cite[Section~2]{huang}. However, this approach also does not give a necessarily small upper bound for $\max\limits_{u \in A_{k}(p)} \int_\Omega |\nabla u|^p \, dx$ due to a gap in the proof of \cite[Lemma~2.3]{huang}. Namely, it is assumed that $\|u\|_{L^p(\Omega)}=1$ for any $u \in A_k(p)$ which might not be correct.}

\medskip

Another usual approach to obtain sets of higher Krasnosel'ski\u{\i} genus for general $p>1$ is based on the independent \textit{scaling} of nodal components of a function, cf.\ Lemma~\ref{lem:Krasnoselskii_by_scaling} below. Assume that some $w \in W_0^{1,p}(\Omega)$ can be represented as $w = w_1 +\dots+ w_k$, where all $w_i \in \mathcal{S}_p$ and they are disjointly supported. 
Considering the set
$$
\mathcal{C}_k = 
\biggl\{
\sum_{i=1}^{k} \alpha_i w_i\,\big|\, \sum_{i=1}^{k} |\alpha_i|^p = 1
\biggr\},
$$
we easily achieve that $\mathcal{C}_k \in \Gamma_k(p)$. 
However, as before, the disadvantage of this approach is that $\max\limits_{u \in \mathcal{C}_k} \int_{\Omega} |\nabla u|^p \, dx$ cannot be made, in general, appropriately small. 

\medskip

In this article, we present a variation of the above-mentioned approaches. Namely, using the symmetries of $\Omega$, we combine the scaling of nodal components of an eigenfunction with its rotations, which allows us to find a set $\mathcal{A} \in \Gamma_k(p)$ for appropriately big $k \in \mathbb{N}$, while keeping control of the value $\max\limits_{u \in \mathcal{A}} \int_{\Omega} |\nabla u|^p \, dx$. 
By virtue of this fact, we obtain the following generalizations of \eqref{eq:chain_for_linear_case} and \eqref{eq:l2k<tk}, which can be seen as a step towards exact multiplicity results for nonlinear variational higher eigenvalues.

\medskip

\begin{thm}\label{thm:main1}
	Let $\Omega \subset \R^N$ be a radially symmetric bounded domain, $N \geq 2$. Let $p>1$, $k \geq 1$ and let $\tau_k(p;\Omega)$ be defined as in \eqref{definitiontau}. Then the following inequalities are satisfied:
	\begin{align}
	\label{eq:ln+1<t2}
	\lambda_2(p;\Omega) \leq \dots \leq \lambda_{N+1}(p;\Omega) &\leq \tau_1(p;\Omega);\\
	\label{eq:l2k+1tk1}
	\lambda_{2k}(p;\Omega) \leq \lambda_{2k + 1}(p;\Omega) &\leq \tau_k(p;\Omega).
	\end{align}
\end{thm}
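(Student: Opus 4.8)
The plan is to prove the two nontrivial inequalities $\lambda_{N+1}(p;\Omega)\le\tau_1(p;\Omega)$ and $\lambda_{2k+1}(p;\Omega)\le\tau_k(p;\Omega)$; the remaining inequalities $\lambda_2\le\dots\le\lambda_{N+1}$ and $\lambda_{2k}\le\lambda_{2k+1}$ are immediate from the monotonicity $\lambda_j(p;\Omega)\le\lambda_{j+1}(p;\Omega)$, which follows at once from the inclusion $\Gamma_{j+1}(p)\subset\Gamma_j(p)$ in the definition \eqref{highereigenvalues}. For each nontrivial inequality it suffices, by \eqref{highereigenvalues}, to exhibit a symmetric compact set $\mathcal{A}\subset\mathcal{S}_p$ with $\gamma(\mathcal{A})\ge N+1$ (resp.\ $\ge 2k+1$) on which the Dirichlet energy $\int_\Omega|\nabla u|^p\,dx$ stays $\le\tau_k(p;\Omega)$.

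\emph{Construction of $\mathcal{A}$.} Fix a symmetric eigenfunction $\Psi$ associated to $\tau_k(p;\Omega)$, normalised in $L^p$, and let $w_1,\dots,w_{2k}$ be its restrictions to the $2k$ nodal wedges (extended by zero and renormalised so that $\|w_i\|_{L^p(\Omega)}=1$). Testing the equation on each nodal domain shows $\int_\Omega|\nabla w_i|^p\,dx=\tau_k(p;\Omega)$, and the $w_i$ have pairwise disjoint supports. Since $\Omega$ is radially symmetric, every rotation $R\in SO(N)$ produces another $\tau_k$-eigenfunction $\Psi\circ R^{-1}$ whose nodal pieces $w_i^R$ again satisfy these properties. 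I would then define $\mathcal{A}$ as the set of all normalised combinations $\sum_i\alpha_i w_i^R$ with $\sum_i|\alpha_i|^p=1$ and $R$ ranging over an appropriate subfamily of rotations. By disjointness of supports for fixed $R$ and rotation invariance of both norms, each such function lies in $\mathcal{S}_p$ and has energy exactly $\tau_k(p;\Omega)$; hence $\max_{u\in\mathcal{A}}\int_\Omega|\nabla u|^p\,dx=\tau_k(p;\Omega)$. Symmetry and compactness of $\mathcal{A}$ follow from the continuity of the rotation action on $W_0^{1,p}(\Omega)$ and of the scaling map. This part is routine.

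The genus lower bound is the heart of the matter. Scaling of the $2k$ disjointly supported pieces alone yields, by Lemma~\ref{lem:Krasnoselskii_by_scaling}, only $\gamma\ge 2k$ (resp.\ $\gamma\ge 2$ for the two pieces when $k=1$); the extra unit of genus must come from the rotations, even though distinct rotations of $\Psi$ have overlapping supports, so the scaling lemma cannot absorb them. For the first inequality the mechanism is that the rotated eigenfunctions $\{\Psi\circ R^{-1}\}$ already form an odd image of $S^{N-1}$ (since the symmetric eigenfunction is antisymmetric across its nodal hyperplane, sending the normal to its opposite negates it), accounting for genus $N$, and adjoining the scaling of the two half-ball pieces supplies the final $+1$; for the second inequality one uses instead that rotation by the angle $\pi/k$ carries $\Psi$ to $-\Psi$, so a one-parameter rotation interpolates $\Psi$ to $-\Psi$ and contributes the extra dimension over the scaling genus $2k$. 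In both cases $\mathcal{A}$ is homeomorphic, compatibly with the antipodal action $u\mapsto -u$, to a free $\Z_2$-quotient of a product of the form $S^{N-1}\times S^1$ (resp.\ its $k$-fold analogue).

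The main obstacle is precisely computing this genus. The tempting shortcut---parametrising $\mathcal{A}$ directly by $S^{N}$ (resp.\ $S^{2k}$) through a suspension/join of the rotation sphere with the scaling sphere---fails, because at the degenerate loci (the ``poles'', where only one nodal piece survives) the candidate function still depends on the rotation parameter and cannot be collapsed; thus $\mathcal{A}$ is genuinely not a sphere. I would therefore establish $\gamma(\mathcal{A})\ge N+1$ (resp.\ $\ge 2k+1$) by a genuine equivariant-topology argument: either by computing the $\Z_2$-cohomological index of $\mathcal{A}$, i.e.\ showing that the appropriate power of the first Stiefel--Whitney class of the double cover $\mathcal{A}\to\mathcal{A}/\{\pm 1\}$ is nonzero, or, in the more hands-on spirit suggested by the degree-theoretic tools of the paper, by assuming $\gamma(\mathcal{A})\le N$ (resp.\ $\le 2k$), composing the hypothetical odd map $\mathcal{A}\to S^{N-1}$ with the rotation--scaling parametrisation, and deriving a contradiction via a Borsuk--Ulam / odd-degree argument. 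I expect this genus computation, isolated as a separate topological lemma about rotation--scaling sets, to be the decisive and most delicate step, while the eigenfunction bookkeeping in the preceding paragraphs is comparatively standard.
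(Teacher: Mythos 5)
Your test sets and energy bookkeeping coincide with the paper's (for \eqref{eq:l2k+1tk1} the set of all $\sum_{i}\alpha_i\,v_{\gamma+(i-1)\pi/k}$ with $\sum_i|\alpha_i|^p=1$, $\gamma\in\R$; for \eqref{eq:ln+1<t2} the set of all $\alpha_1v_x+\alpha_2v_{-x}$, $x\in S^{N-1}$), and you correctly diagnose both that the genus lower bound is the crux and that the naive suspension/join picture fails. But the proposal stops precisely at that crux: the bounds $\gamma(\mathcal{A})\ge N+1$ and $\gamma(\mathcal{A})\ge 2k+1$ are asserted with two candidate strategies named, and neither is carried out. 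Since everything else is, as you say, routine, this is a genuine gap rather than a completed proof.

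The missing ingredient is an obstruction lemma of the following form (Lemma~\ref{lem:ObstructionLemma} in the paper): if $f:S^n\times[0,1]\to X$ has $f_{|S^n\times\{0\}}$ odd and either (a) $f_{|S^n\times\{1\}}$ even, or (b) $f_{|S^n\times\{1\}}=f_{|S^n\times\{0\}}\circ g$ with $\deg(g)\neq 1$, then there is no odd map $X\to S^m$ for $m\le n$. The proof is short: a hypothetical odd $h:X\to S^m\subset S^n$ turns $t\mapsto h\circ f_{|S^n\times\{t\}}$ into a homotopy, so the endpoints have the same degree $d$; $d$ is odd by Borsuk's theorem, yet in case (a) it is even (even self-maps of $S^n$ have even degree), and in case (b) it equals $d\cdot\deg(g)\neq d$ because $d\neq 0$. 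For \eqref{eq:ln+1<t2} one applies case (a) with $f(x,t)=\gamma_1(t)v_x+\gamma_2(t)v_{-x}$ along a path in the $p$-circle from $(2^{-1/p},-2^{-1/p})$ to $(2^{-1/p},2^{-1/p})$. For \eqref{eq:l2k+1tk1} one applies case (b): rotating $\gamma$ by $\pi/k$ induces on the coefficient sphere $S^{2k-1}$ the cyclic permutation $(x_1,\dots,x_{2k})\mapsto(x_{2k},x_1,\dots,x_{2k-1})$, whose degree is $(-1)^{2k-1}=-1$ (it factors as rotations times one reflection). Note that your observation that rotation by $\pi/k$ sends $\Psi_k$ to $-\Psi_k$ is the right germ but is not by itself sufficient: what the argument actually needs is the degree of the induced coordinate permutation on $S^{2k-1}$, not the sign change of the single function $\Psi_k$. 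Until this lemma (or an equivalent $\Z_2$-index computation) is supplied, the decisive step of the theorem remains unproven.
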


\medskip

Theorem \ref{thm:main1} implies that, if $\lambda_2(p;\Omega)=\tau_1(p;\Omega)$, then the second eigenvalue has multiplicity at least $N$. It is also meaningful to emphasize that the inequalities \eqref{eq:ln+1<t2} \textit{do not} imply that eigenfunctions associated to $\lambda_3(p;B^N), \dots, \lambda_{N+1}(p;B^N)$ are nonradial. Indeed, to the best of our knowledge, the inequality $\tau_1(p;B^N) < \mu_2(p;B^N)$ is not proved yet for general $p > 1$ and $N \geq 3$. Nevertheless, in the planar case, the results of \cite{benediktdrabekgirg} and \cite{bobkovdrabek} allow us to characterize Theorem~\ref{thm:main1} in a more precise way. 
For visual simplicity we denote
$$ 
\lambda_\ominus(p):= \tau_1(p;B^2), 
\quad 
\lambda_\oplus(p):= \tau_2(p;B^2), 
\quad 
\lambda_\circledcirc(p) := \mu_2(p;B^2).
$$
Recall that if $p=2$, then
\begin{equation*}
\lambda_2(2;B^2)=\lambda_3(2;B^2)=\lambda_\ominus(p)
~<~
\lambda_4(2;B^2)=\lambda_5(2;B^2)=\lambda_\oplus(p)
~<~
\lambda_6(2;B^2)=\lambda_\circledcirc(p).
\end{equation*}
For $p>1$ we have the following result.
\medskip

\begin{prop}\label{prof:detalization_of_main_theorem_in_2D_case}
	Let $N=2$. Then for every $p>1$ it holds
	\begin{equation}\label{eq:detalization_of_l2k+1tk_in_2D_case}
	\lambda_2(p;B^2) \leq \lambda_3(p;B^2) \leq \lambda_\ominus(p) < \lambda_\circledcirc(p),
	\end{equation}
	that is, any third eigenfunction on the disc is not radially symmetric. 
	Moreover, there exists $p_1>1$ such that
	\begin{equation}\label{eq:l4<l5<lrad}
	\lambda_4(p;B^2) \leq \lambda_5(p;B^2) \leq \lambda_\oplus(p;2) < \lambda_\circledcirc(p;2)
	\quad \text{ for all } p > p_1,
	\end{equation}
	that is, fourth and fifth eigenfunctions on the disc are also not radially symmetric for $p>p_1$. 
\end{prop}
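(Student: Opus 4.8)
The plan is to deduce the chain \eqref{eq:detalization_of_l2k+1tk_in_2D_case} directly from Theorem~\ref{thm:main1} specialized to $N=2$ and $\Omega=B^2$, and then to supplement it with the strict inequality coming from the known nonradiality results. For the first chain, applying \eqref{eq:l2k+1tk1} with $k=1$ (equivalently \eqref{eq:ln+1<t2} with $N=2$) gives immediately
$$
\lambda_2(p;B^2) \leq \lambda_3(p;B^2) \leq \tau_1(p;B^2) = \lambda_\ominus(p).
$$
The remaining ingredient is the \emph{strict} inequality $\lambda_\ominus(p) < \lambda_\circledcirc(p)$, i.e.\ $\tau_1(p;B^2) < \mu_2(p;B^2)$. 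This is exactly the content of \cite[Theorem~1.1]{bobkovdrabek} in the planar case, which the excerpt has already invoked to assert $\lambda_k(p;B^N)<\mu_k(p;B^N)$ for $k\geq 2$; here I would cite the specific comparison between the symmetric (wedge) eigenvalue $\tau_1$ and the radial eigenvalue $\mu_2$ established there. Combining the two parts yields \eqref{eq:detalization_of_l2k+1tk_in_2D_case}. The interpretive statement that any third eigenfunction is nonradial then follows: a radial third eigenfunction would force $\lambda_3(p;B^2)=\mu_2(p;B^2)=\lambda_\circledcirc(p)$, contradicting $\lambda_3(p;B^2)\leq\lambda_\ominus(p)<\lambda_\circledcirc(p)$.

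For the second chain \eqref{eq:l4<l5<lrad}, I would again start from \eqref{eq:l2k+1tk1}, now with $k=2$ and $N=2$, which gives
$$
\lambda_4(p;B^2) \leq \lambda_5(p;B^2) \leq \tau_2(p;B^2) = \lambda_\oplus(p).
$$
The issue is that the needed strict inequality $\lambda_\oplus(p) < \lambda_\circledcirc(p)$, i.e.\ $\tau_2(p;B^2) < \mu_2(p;B^2)$, is \emph{not} known for all $p>1$. This is why the conclusion is stated only for $p>p_1$. I expect this comparison to be the main obstacle, and to resolve it I would appeal to the asymptotic or perturbative results available in the planar setting: as $p\to\infty$ (or more generally away from the linear regime) the ordering of the wedge-type eigenvalue $\tau_2$ and the radial eigenvalue $\mu_2$ can be controlled, and the existence of a threshold $p_1>1$ beyond which $\tau_2(p;B^2)<\mu_2(p;B^2)$ holds should follow from \cite{benediktdrabekgirg} together with \cite{bobkovdrabek}. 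Concretely, one knows at $p=2$ that $\lambda_\oplus(2)<\lambda_\circledcirc(2)$ strictly, and a continuity/limit argument in $p$, combined with the explicit comparison of the two sequences available in those references, delivers a range $(p_1,\infty)$ on which the strict inequality persists.

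Once $\tau_2(p;B^2)<\mu_2(p;B^2)$ is secured for $p>p_1$, the chain \eqref{eq:l4<l5<lrad} is immediate, and the nonradiality of the fourth and fifth eigenfunctions for such $p$ follows by the same contradiction argument as above: a radial fourth or fifth eigenfunction would force the corresponding $\lambda_i(p;B^2)$ to equal $\mu_2(p;B^2)=\lambda_\circledcirc(p)$, contradicting $\lambda_i(p;B^2)\leq\lambda_\oplus(p)<\lambda_\circledcirc(p)$ for $i=4,5$. Thus the entire proposition reduces to the two eigenvalue comparisons $\tau_1<\mu_2$ (all $p$) and $\tau_2<\mu_2$ (large $p$), with the latter being the delicate point requiring the external results of \cite{benediktdrabekgirg} and \cite{bobkovdrabek}.
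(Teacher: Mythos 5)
Your reduction is the same as the paper's: both chains of non-strict inequalities come straight from \eqref{eq:l2k+1tk1} (equivalently \eqref{eq:ln+1<t2} for $N=2$), the nonradiality statements follow by the contradiction argument you describe, and everything hinges on the two strict comparisons $\lambda_\ominus(p)<\lambda_\circledcirc(p)$ for all $p>1$ and $\lambda_\oplus(p)<\lambda_\circledcirc(p)$ for large $p$. For the second of these your citation target is essentially right --- it is precisely \cite[Theorem~1.2]{bobkovdrabek} --- though your proposed mechanism of a ``continuity/limit argument in $p$'' starting from $p=2$ would at best give an interval around $2$ rather than a half-line $(p_1,\infty)$; note also that the inequality genuinely fails near $p=1$ (\cite[Theorem~1.3]{bobkovdrabek}), so no soft perturbation argument from the linear case can replace that reference.

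The genuine gap is in the first strict inequality. You attribute $\tau_1(p;B^2)<\mu_2(p;B^2)$ to \cite[Theorem~1.1]{bobkovdrabek}, but that theorem asserts $\lambda_k(p;B^N)<\mu_k(p;B^N)$, i.e.\ it compares the \emph{variational} eigenvalue with the radial one. Since $\lambda_2(p;B^2)\leq\tau_1(p;B^2)$, knowing $\lambda_2<\mu_2$ gives no information on the relative position of $\tau_1$ and $\mu_2$: these are two different upper bounds for $\lambda_2$, and no comparison between them is ``established there.'' The inequality you need is in fact the main content of \cite{benediktdrabekgirg}, and even so it must be assembled. Writing $B^+$ for a half-disc of $B^2$, one has $\lambda_\ominus(p)=\lambda_1(p;B^+)<\lambda_1\bigl(p;B^2_{1/2}\bigr)$ by strict domain monotonicity, while $\lambda_\circledcirc(p)=\lambda_1\bigl(p;B^2_{\nu_1(p)/\nu_2(p)}\bigr)$ with $\nu_1(p),\nu_2(p)$ the first two zeros of the radial solution of \eqref{eq:radial}; the conclusion then requires $2\nu_1(p)<\nu_2(p)$, which is proved in \cite{benediktdrabekgirg} by validated numerics for $p\in[1.01,226]$ and by analytic bounds for $p>226$, and the remaining window $p\in(1,1.01)$ has to be patched separately by combining the bound $\lambda_\ominus(p)\leq 3.5$ from \cite{benediktdrabekgirg} with $\lambda_\circledcirc(p)>3.5$ from the proof of \cite[Theorem~6.1]{parini}. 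Without this chain (or an equivalent substitute) the first strict inequality in \eqref{eq:detalization_of_l2k+1tk_in_2D_case} is unsupported in your write-up.
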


Note that the last inequality in \eqref{eq:l4<l5<lrad} is reversed for $p$ close to $1$, see~\cite[Theorem~1.3]{bobkovdrabek}.

\medskip

Consider now a bounded domain $\Omega \subset \mathbb{R}^N$ which is invariant under rotation of $N-l$ variables for some $l \in \{1,\dots,N-1\}$, see the definition \eqref{eq:domain_of_revolution} below. Analogously to the case of $N$-ball, it is possible to define symmetric eigenvalues $\tau_k(p;\Omega)$ of the $p$-Laplacian on $\Omega$ for any $k \in \mathbb{N}$, see Section~\ref{sec:Eigenvalue_auxiliary_facts} below. Similarly to Theorem~\ref{thm:main1}, we have the following facts.
\medskip
\begin{prop}\label{prop:main2}
	Let $\Omega \subset \R^N$ be a bounded domain of $N-l$ revolutions defined by \eqref{eq:domain_of_revolution}, where $N \geq 2$ and $l \in \{1,\dots,N-1\}$. Let $p>1$ and $k \geq 1$.
	Then the following inequalities are satisfied:
	\begin{align}
	\label{eq:ln+1<t2_excentric}
	\lambda_2(p;\Omega) \leq \dots \leq \lambda_{N-l+2}(p;\Omega) &\leq \tau_1(p;\Omega);\\
	\label{eq:l2k+1tk1_excentric}
	\lambda_{2k}(p;\Omega) \leq \lambda_{2k + 1}(p;\Omega) &\leq \tau_k(p;\Omega).
	\end{align}	
\end{prop}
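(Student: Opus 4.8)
The plan is to mimic the proof of Theorem~\ref{thm:main1} but with the rotation group of the ball replaced by the smaller symmetry group of a domain of $N-l$ revolutions. The key observation is that the chain \eqref{eq:ln+1<t2_excentric} terminates at $\lambda_{N-l+2}$ rather than $\lambda_{N+1}$ precisely because the relevant group of rotations acts only on the last $N-l$ coordinates, so one can produce only $N-l$ independent rotated copies of a symmetric eigenfunction (plus the first eigenfunction), giving a symmetric compact set of Krasnosel'ski\u{\i} genus $N-l+2$ rather than $N+1$. Concretely, I would let $\Psi_1$ be a symmetric eigenfunction associated to $\tau_1(p;\Omega)$, whose nodal set is a hyperplane section of $\Omega$ orthogonal to one of the revolution axes, and I would let $\varphi_1$ be the (radial-in-the-revolved-variables) first eigenfunction. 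Applying the $SO(N-l)$ action to $\Psi_1$ produces $N-l$ eigenfunctions whose nodal hyperplanes are mutually orthogonal within the revolved block of coordinates.

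The heart of the argument is the same genus construction used for the ball, combining scaling of the two nodal components of each rotated copy of $\Psi_1$ with the continuous rotation action. I would assemble these pieces into a symmetric compact set $\mathcal{A} \in \Gamma_{N-l+2}(p)$ on which $\max_{u \in \mathcal{A}} \int_\Omega |\nabla u|^p\,dx \leq \tau_1(p;\Omega)$, and invoke \eqref{highereigenvalues} together with the monotonicity $\lambda_2 \leq \dots \leq \lambda_{N-l+2}$ to obtain \eqref{eq:ln+1<t2_excentric}. The inequality \eqref{eq:l2k+1tk1_excentric} is independent of the ambient dimension and the revolution structure: it rests only on the existence of a symmetric eigenfunction associated to $\tau_k(p;\Omega)$ with $2k$ nodal domains, together with one extra rotated nodal configuration producing genus $2k+1$. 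I would therefore carry this part over verbatim from the proof of \eqref{eq:l2k+1tk1}, since the construction there already uses nothing beyond a single revolution axis, which \eqref{eq:domain_of_revolution} guarantees.

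The main obstacle I anticipate is verifying that the genus of the constructed set is \emph{exactly} (or at least \emph{at least}) $N-l+2$, and not accidentally larger or smaller. In the fully radial case the rotation group is $SO(N)$ and the orbit of a single nodal hyperplane sweeps out all $N$ orthogonal directions; here one must check carefully that the $SO(N-l)$ orbit of $\Psi_1$, together with the scaling of its two nodal pieces and the inclusion of $\varphi_1$, yields a set whose image under a natural odd map lands in a sphere of the correct dimension. This requires an explicit odd map $\mathcal{A} \to S^{N-l+1}$ realizing the upper genus bound, combined with a Borsuk--Ulam-type lower bound showing genus cannot drop. I expect the lower bound to follow from the same topological argument already established for the ball (a degree or Borsuk--Ulam computation), adapted to the reduced number of free rotation parameters; the verification that the revolution symmetry supplies precisely $N-l$ independent directions is the step where one must be most attentive to the geometry encoded in \eqref{eq:domain_of_revolution}.
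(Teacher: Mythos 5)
Your treatment of \eqref{eq:l2k+1tk1_excentric} is exactly right: the paper proves Proposition~\ref{prop:first_inequality} directly for domains of $N-l$ revolutions, using only the rotation in the angle $\theta_{N-1}$, so nothing needs to be adapted there. For \eqref{eq:ln+1<t2_excentric} your second paragraph also identifies the correct mechanism --- scaling the two nodal components of the rotated symmetric eigenfunction against the continuous rotation action, then a Borsuk--Ulam-type obstruction --- which is what the paper does in Proposition~\ref{prop:ln+1<t2_excentric} by taking $\mathcal{A} = \{\alpha_1 v_x + \alpha_2 v_{-x} \,:\, |\alpha_1|^p + |\alpha_2|^p = 1,\ x \in S^{N-l}\}$ and applying assertion (a) of Lemma~\ref{lem:ObstructionLemma} to the homotopy from the odd family $\frac{v_x - v_{-x}}{\sqrt[p]{2}}$ to the even family $\frac{v_x + v_{-x}}{\sqrt[p]{2}}$.

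However, two points in your first paragraph would derail the argument if taken literally. First, the first eigenfunction $\varphi_1$ must not appear in the construction. Since $\varphi_1$ and the rotated copies of $\Psi_1$ do not have disjoint supports, linear combinations involving $\varphi_1$ leave $\mathcal{S}_p$, and after renormalization the bound $\max_{u\in\mathcal{A}}\int_\Omega|\nabla u|^p\,dx \le \tau_1(p;\Omega)$ is lost; this is precisely the failure of the ``span of eigenfunctions'' approach for $p\neq 2$ discussed in the introduction (cf.\ the footnote on \cite{huang}). The extra unit of genus that $\varphi_1$ supplies in the linear case is obtained here instead from the scaling parameter $(\alpha_1,\alpha_2)$, whose endpoints give an odd and an even family respectively. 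Second, your count is off by one: the free angles $(\theta_l,\dots,\theta_{N-1})$ parametrize $S^{N-l}\subset\R^{N-l+1}$, so the rotation orbit of the nodal section is indexed by $S^{N-l}$ (equivalently, $N-l+1$ mutually orthogonal normals are available), and it is this sphere together with the scaling homotopy that yields $\gamma(\mathcal{A})\ge N-l+2$; with only ``$N-l$ independent copies plus $\varphi_1$'' you would reach at most genus $N-l+1$, which does not suffice. Finally, only the lower bound on the genus is needed to conclude $\mathcal{A}\in\Gamma_{N-l+2}(p)$, so there is no need to exhibit an odd map $\mathcal{A}\to S^{N-l+1}$ realizing the genus exactly.
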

\medskip

The article is organized as follows. 
In Section~\ref{sec:HomAlg}, we recall some facts from Algebraic Topology and prove necessary technical statements.
Section~\ref{sec:Eigenvalue_auxiliary_facts} is mainly devoted to the construction of symmetric eigenvalues on domains of revolution. 
Section~\ref{sec:proofs} contains the proofs of the main results.
Finally, in Section~\ref{sec:open_problems}, we discuss the limit cases $p=1$ and $p=\infty$ and some naturally appeared open problems.

\section{Preliminaries}\label{sec:preliminaries}

\subsection{Some algebraic topological results}\label{sec:HomAlg}

Recall first that a subset $X$ of a topological vector space is \textit{symmetric} if it is invariant under the central symmetry map $\iota$ defined as $\iota(x)=-x$. A map $f$ between symmetric sets is called \emph{odd} if $f \circ \iota = \iota \circ f$, and it will be called \emph{even} if $f\circ\iota=f$. 
In the following, we assume all maps to be continuous.

Let us denote by $H_k(X)$ the $k^\textrm{th}$ homology group (over $\Z$) of a manifold $X$ (cf.~\cite[Chapter~2]{hatcher}).
We say that a manifold is an \emph{$n$-manifold} (with $n \in \mathbb{N}$) if it is an oriented closed $n$-dimensional manifold.
If $X$ is an $n$-manifold, then it can be shown that $H_n(X) \cong \Z$ \cite[Theorem~3.26]{hatcher} with a preferred generator given by the orientation of $X$. Moreover, by post-composition, any map $f:X\to Y$ induces linear maps $f_k:H_k(X)\to H_k(Y)$ for each $k\in\N$. When both $X$ and $Y$ are $n$-manifolds, the \emph{degree of the map $f$} is defined as the image by $f_n$ of the preferred generator of $H_n(X)$ in $H_n(Y)\cong\Z$ and denoted as $\deg(f)$. It follows directly from the definitions that if $f:X\to Y$ and $g:Y \to Z$ are two continuous maps between $n$-manifolds, then $\deg(g \circ f) = \deg(g) \deg(f)$. Moreover, two \emph{homotopic} maps, that is two maps with a continuous path of maps between them, have the same degree since they induce the same map on homology; see \cite[Theorem 2.10]{hatcher} and point (c) in \cite[p.134]{hatcher}.

The following result is known as \textit{Borsuk's Theorem} and it was proved in \cite[Hilfssatz~6]{borsuk}. 
An English written proof can found in \cite[Proposition~2B.6]{hatcher}.
\begin{thm}\label{thm:Borsuk}
  Any odd map $f:S^n\to S^n$ has an odd degree. 
\end{thm}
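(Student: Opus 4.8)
The plan is to reduce the statement to a computation with $\Z/2$ coefficients on real projective space. Since the paper defines $\deg(f)$ through the induced map on $H_n(S^n)\cong\Z$, the assertion that $\deg(f)$ is odd is equivalent, after reduction modulo $2$, to the statement that the induced map $f^*\colon H^n(S^n;\Z/2)\to H^n(S^n;\Z/2)$ is nonzero, i.e.\ an isomorphism of the group $\Z/2$. So I would work throughout with $\Z/2$ coefficients (to have access to cup products and to make every space orientable) and aim to prove that $f^*$ is nonzero in top degree. The base case $n=0$, where $f$ is either the identity or the antipodal map, is checked by hand.

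Next I would exploit oddness to pass to a quotient. Writing $a$ for the antipodal map, oddness means $f\circ a=a\circ f$, so $f$ descends to a map $\bar f\colon \mathbb{RP}^n\to\mathbb{RP}^n$ fitting into a commutative square with the double covers $q\colon S^n\to\mathbb{RP}^n$; equivalently, $f$ is a morphism of the $S^0$-bundle $q$ (the unit sphere bundle of the tautological line bundle $\gamma_n$) covering $\bar f$. The key geometric input, and the only place where oddness is genuinely used, is to show that $\bar f^*$ is an isomorphism on $H^1(\mathbb{RP}^n;\Z/2)$. For this I would represent the generator of $\pi_1(\mathbb{RP}^n)$ by a loop $\ell$ whose lift to $S^n$ joins a point $v$ to its antipode $-v$; then $\bar f\circ\ell$ lifts to $f$ applied to that path, which ends at $f(-v)=-f(v)$, again an antipodal pair. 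Hence $\bar f_*$ sends the nontrivial element of $\pi_1$ to the nontrivial element, so $\bar f_*$, and dually $\bar f^*$, is an isomorphism on $H_1$, respectively $H^1$, with $\Z/2$ coefficients.

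The remaining steps propagate this to top degree and transport it back to $S^n$. Using the ring structure $H^*(\mathbb{RP}^n;\Z/2)=\Z/2[\alpha]/(\alpha^{n+1})$ with $\alpha$ in degree $1$, the equality $\bar f^*\alpha=\alpha$ yields $\bar f^*(\alpha^n)=(\bar f^*\alpha)^n=\alpha^n\neq 0$, so $\bar f^*$ is an isomorphism on $H^n(\mathbb{RP}^n;\Z/2)$. Finally I would compare top degrees through the Gysin sequence of $\gamma_n$: since $\smile\alpha$ is an isomorphism in each degree up to $n$, the sequence produces a transfer isomorphism $\delta\colon H^n(S^n;\Z/2)\xrightarrow{\ \cong\ } H^n(\mathbb{RP}^n;\Z/2)$, and its naturality with respect to the bundle map $(f,\bar f)$ gives $\delta\circ f^*=\bar f^*\circ\delta$. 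As $\delta$ and $\bar f^*$ are isomorphisms, so is $f^*$, which is exactly the claim that $\deg(f)$ is odd.

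The step I expect to be the main obstacle is this last top-degree comparison. The naive commutative square coming from $q$ is useless here, because $q_*$ vanishes on top $\Z/2$-homology (the integral cellular boundary maps of $\mathbb{RP}^n$ are $0,2,0,2,\dots$, so $q$ doubles the top class), so the real content is identifying the correct natural map, namely the Gysin coboundary / transfer $\delta$, verifying it is an isomorphism in degree $n$, and checking that the covering square is a genuine pullback of covers so that naturality applies. The $H^1$-isomorphism is conceptually the heart of the argument since it is where oddness enters, but it is short; the fiddly part is setting up the Gysin comparison cleanly.
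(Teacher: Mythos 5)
Your proof is correct, and it is essentially the proof the paper points to: the paper does not argue Theorem \ref{thm:Borsuk} itself but cites \cite[Proposition~2B.6]{hatcher}, whose argument is exactly your scheme of descending $f$ to $\bar f\colon \mathbb{RP}^n\to\mathbb{RP}^n$, working with $\Z/2$ coefficients, using the lifting/$\pi_1$ argument to handle degree one, and comparing $S^n$ with $\mathbb{RP}^n$ via the transfer sequence of the double cover (which, for this $S^0$-bundle, is precisely your Gysin sequence). The only cosmetic difference is that you propagate the degree-one isomorphism to top degree by cup products in $H^*(\mathbb{RP}^n;\Z/2)\cong\Z/2[\alpha]/(\alpha^{n+1})$, whereas Hatcher inducts up the long exact transfer sequence; both steps are standard and your version is complete.
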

\begin{rem}\label{rem:Borsuk_Ulam_classic}
  Borsuk's Theorem  implies the classical Borsuk-Ulam Theorem which states that there
  is no odd map from a sphere into a sphere of strictly lower dimension. 
\end{rem}

The following proposition is considered as well-known in the literature, see, e.g., \cite[Exercice 14, p. 156]{hatcher}.
\begin{prop}\label{prop:EvenBorsuk}
    Any even map $f:S^n\to S^n$ has an even degree. 
\end{prop}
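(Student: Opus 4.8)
The plan is to exploit the fact that an even map factors through real projective space. Write $q\colon S^n \to \mathbb{RP}^n$ for the canonical two-sheeted covering, and recall that $\iota$ denotes the antipodal map $\iota(x)=-x$. The hypothesis $f\circ\iota=f$ says exactly that $f$ is constant on the fibres of $q$, so $f$ descends to a continuous map $g\colon \mathbb{RP}^n\to S^n$ with $f=g\circ q$. Passing to top-dimensional homology gives $f_n=g_n\circ q_n\colon H_n(S^n)\to H_n(S^n)$, and since $\deg(f)$ is by definition the integer by which $f_n$ multiplies the preferred generator, everything reduces to understanding the map $q_n\colon H_n(S^n)\to H_n(\mathbb{RP}^n)$ induced by the covering.

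First I would split according to the parity of $n$, which governs the orientability of $\mathbb{RP}^n$ (cf.\ \cite[Example~3.12]{hatcher}). If $n$ is even, then $\mathbb{RP}^n$ is non-orientable and $H_n(\mathbb{RP}^n;\Z)=0$, so $q_n=0$; hence $f_n=g_n\circ q_n=0$ and $\deg(f)=0$, which is even. If $n$ is odd, then the degree of $\iota$ equals $(-1)^{n+1}=1$, so $\iota$ preserves orientation, $\mathbb{RP}^n$ is an oriented $n$-manifold with $H_n(\mathbb{RP}^n;\Z)\cong\Z$, and $g$ is a map between oriented $n$-manifolds with a well-defined integer degree. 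The decisive point is that the orientation-preserving double cover $q$ has degree $2$, i.e.\ $q_n$ carries the preferred generator of $H_n(S^n)$ to twice a generator of $H_n(\mathbb{RP}^n)$. Granting this, $f_n=g_n\circ q_n$ multiplies the generator by $2\deg(g)$, and so $\deg(f)=2\deg(g)$ is even.

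The main obstacle is thus the odd case, and precisely the identity $\deg(q)=2$. I would justify it directly from the covering structure: $q$ is a local homeomorphism, so every point of $\mathbb{RP}^n$ is a regular value with exactly two preimages, and because $q$ is orientation-preserving each preimage contributes local degree $+1$; summing gives $\deg(q)=2$. This is really the heart of the matter, and it is the same arithmetic phenomenon—the parity of the antipodal map—that underlies Borsuk's Theorem for odd maps. The required facts, namely the computation of $H_*(\mathbb{RP}^n;\Z)$ and the orientation behaviour of $\iota$, are standard and citable from \cite[Chapter~2]{hatcher}.

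Alternatively, one can bypass $\mathbb{RP}^n$ altogether with a purely geometric count. After replacing $f$ by a smooth even map (for instance by smoothing the factor $g$ and recomposing with $q$), one evaluates $\deg(f)$ by counting, with signs, the preimages of a regular value $y\in S^n$. Evenness forces these preimages to occur in antipodal pairs $\{x,-x\}$, and differentiating the relation $f=f\circ\iota$ shows that the local degrees at $x$ and at $-x$ differ by the factor $(-1)^{n+1}$. Consequently each pair contributes $\pm 2$ to the total when $n$ is odd and $0$ when $n$ is even, so $\deg(f)$ is even in either case. I expect this second route to be the cleaner one to write out in full, the only technical care being the equivariant smoothing and the orientation bookkeeping for $\iota$.
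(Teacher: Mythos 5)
Your argument is correct. The paper itself gives no proof of this proposition --- it is quoted as well known, with a pointer to Exercise 14 on p.~156 of Hatcher --- and your factorization $f=g\circ q$ through the double cover $q\colon S^n\to\mathbb{RP}^n$, with the case split on the parity of $n$ (so that $H_n(\mathbb{RP}^n;\Z)=0$ forces $\deg(f)=0$ for $n$ even, while $\deg(q)=2$ gives $\deg(f)=2\deg(g)$ for $n$ odd), is precisely the standard intended solution of that exercise. Both your homological route and your alternative regular-value count are sound; the first is the cleaner one to write down, since it avoids the equivariant smoothing step entirely.
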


The following lemma, which will be crucial for our arguments, is a consequence of Borsuk's Theorem.

\begin{lem}\label{lem:ObstructionLemma}
  Let $X$ be a symmetric subset of a topological space. Suppose that there is a map $f:S^n\times [0,1]\to X$
  such that $f_{|S^n\times\{0\}}$ is odd, and either of the following conditions is satisfied:
  \begin{enumerate}
   \item[(a)] $f_{|S^n\times\{1\}}$ is even;
   \item[(b)] $f_{|S^n\times\{1\}}$ is equal to $f_{|S^n\times\{0\}}\circ g$, where $g:S^n\to S^n$ is a map such that $\deg(g)\neq 1$.
  \end{enumerate}
 Then there is no odd map from $X$ to $S^k$ for $k\leq n$.
\end{lem}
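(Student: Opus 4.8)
The plan is to argue by contradiction: assuming that an odd map $h\colon X\to S^k$ with $k\le n$ exists, I would transport the whole situation onto spheres by pre-composing with the given map $f$, and then extract a numerical contradiction from degree theory together with Borsuk's Theorem (Theorem~\ref{thm:Borsuk}) and Proposition~\ref{prop:EvenBorsuk}. The guiding observation is that $h\circ f\colon S^n\times[0,1]\to S^k$ is a homotopy whose restriction to $S^n\times\{0\}$ is a composition of two odd maps, hence odd, while its restriction to $S^n\times\{1\}$ inherits the special structure imposed by (a) or (b). The parity claims are routine to verify: if $f_{|S^n\times\{0\}}$ is odd then $h\circ f_{|S^n\times\{0\}}\circ\iota=h\circ\iota\circ f_{|S^n\times\{0\}}=\iota\circ h\circ f_{|S^n\times\{0\}}$, and likewise $f_{|S^n\times\{1\}}$ even forces $h\circ f_{|S^n\times\{1\}}$ to be even.

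First I would dispose of the case $k<n$. Here the map $h\circ f_{|S^n\times\{0\}}\colon S^n\to S^k$ is odd, being a composition of odd maps, and lands in a sphere of strictly lower dimension, which is impossible by the Borsuk--Ulam Theorem recorded in Remark~\ref{rem:Borsuk_Ulam_classic}. Observe that neither hypothesis (a) nor (b) is used for this case; they enter only when $k=n$.

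It remains to treat $k=n$, where degrees are defined because all relevant maps now run between the $n$-manifold $S^n$ and itself. Set $H_0:=h\circ f_{|S^n\times\{0\}}$ and $H_1:=h\circ f_{|S^n\times\{1\}}$. Since $H_0$ and $H_1$ are homotopic through $h\circ f$, they have the same degree. On the one hand, $H_0$ is odd, so $\deg(H_0)$ is odd by Theorem~\ref{thm:Borsuk}; in particular $\deg(H_0)\neq 0$. In case (a), the map $H_1$ is even, whence $\deg(H_1)$ is even by Proposition~\ref{prop:EvenBorsuk}; equality of degrees then forces an odd integer to equal an even one, a contradiction. In case (b), we have $H_1=H_0\circ g$, so by multiplicativity of the degree
\[
\deg(H_0)=\deg(H_1)=\deg(H_0)\,\deg(g),
\]
and dividing by the nonzero integer $\deg(H_0)$ yields $\deg(g)=1$, contradicting the hypothesis $\deg(g)\neq 1$. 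In either case we reach a contradiction, so no odd map $X\to S^k$ can exist.

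The only genuine subtlety --- and the step I would treat most carefully --- is the dimension bookkeeping: the degree is available only when source and target are equidimensional, so the reduction to $k=n$ (with Borsuk--Ulam handling $k<n$) must be made explicit, and one must check the parity of $H_0$ and $H_1$ before invoking the two degree-parity results. Everything else is a direct assembly of homotopy invariance and multiplicativity of the degree, both recorded in the preliminaries.
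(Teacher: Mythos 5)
Your proof is correct and follows essentially the same route as the paper's: homotopy invariance of the degree along $h\circ f$, Theorem~\ref{thm:Borsuk} to make the degree odd at $t=0$, and Proposition~\ref{prop:EvenBorsuk} (case (a)) or multiplicativity of the degree (case (b)) to force a contradiction at $t=1$. The only, harmless, difference is that you dispose of the case $k<n$ directly via the Borsuk--Ulam Theorem of Remark~\ref{rem:Borsuk_Ulam_classic}, whereas the paper promotes $h$ to an odd map into $S^n$ by viewing $S^k$ as an iterated equator and then runs a single degree argument covering all $k\leq n$.
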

\begin{proof}
  Assume, by contradiction, that there exists an odd map $h:X\to S^k$ for some $k\leq n$. 
  By considering $S^k$ as an iterated equator of
  $S^n$, $f$ can be promoted as an odd map $h:X\to S^n$.
  Since $\big(t\mapsto h\circ f_{|S^n\times\{t\}}\big)$ is a continuous map from $h\circ f_{|S^n\times\{0\}}$ to $h\circ f_{|S^n\times\{1\}}$, it follows that they are homotopic and hence have the same degree $d$. 
  Moreover, since $h\circ f_{|S^n\times\{0\}}:S^n\to S^n$ is an odd map, it follows from
  Theorem \ref{thm:Borsuk} that $d$ is odd. Now we distinguish the two cases:
  \begin{enumerate}
   \item[(i)] Under assumption $(a)$, if $f_{|S^n\times\{1\}}$ is even, then so
 is $h\circ f_{|S^n\times\{1\}}:S^n\to S^n$ and hence $d$ is even by Proposition \ref{prop:EvenBorsuk}.
   \item[(ii)] Under assumption $(b)$, we use the multiplicativity of the degree to get 
$$
d=\deg(h\circ f_{|S^n\times\{1\}})=\deg(h\circ f_{|S^n\times\{0\}}\circ g)=\deg(h\circ
  f_{|S^n\times\{0\}})\deg(g)=d \cdot \deg(g) \neq d,
$$ 
since $\deg(g)\neq 1$ by assumption, and $d\neq 0$ since it is odd.
  \end{enumerate}
In both cases, we get a contradiction, and hence the lemma follows.
\end{proof}
\begin{rem}
  It is possible to obtain a weaker result by using the classical Borsuk-Ulam Theorem, without any assumptions on $f_{|S^n\times\{1\}}$. In this case, one can only prove nonexistence of odd maps from $X$ to $S^k$ for $k \leq n-1$.
\end{rem}

To be applied, Lemma \ref{lem:ObstructionLemma} requires an evaluation of the degree of the map $g$. We address now a very elementary example that will be useful to prove Proposition \ref{prop:first_inequality} below.
For that purpose, we consider the permutation map $\tau:S^n\to S^n$ defined by 
  $\tau(x_1,x_2,\ldots,x_{n+1})=(x_{n+1},x_1,\ldots,x_n)$.
\begin{lem}\label{lem:Degrees}
  The map $\tau$ has degree $(-1)^n$.
\end{lem}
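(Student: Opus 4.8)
The plan is to realize $\tau$ as the restriction of a linear map and then reduce the computation, via the multiplicativity of the degree already recorded in the excerpt, to the single known value of the degree of a reflection.

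First I would observe that $\tau$ is the restriction to $S^n$ of the orthogonal linear map $T\colon\R^{n+1}\to\R^{n+1}$ given by the cyclic coordinate shift $T(x_1,\dots,x_{n+1})=(x_{n+1},x_1,\dots,x_n)$; orthogonality guarantees $T(S^n)=S^n$, so $\tau=T|_{S^n}$ is a well-defined continuous self-map of the $n$-manifold $S^n$ and its degree is defined. The underlying permutation of coordinates is a single $(n+1)$-cycle, hence factors as a product of $n$ transpositions. Accordingly I would write $T=T_n\circ\cdots\circ T_1$, where each $T_j$ is orthogonal and swaps exactly two coordinates while fixing the rest; restricting to $S^n$ and applying multiplicativity gives $\deg(\tau)=\prod_{j=1}^{n}\deg(T_j|_{S^n})$. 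It therefore suffices to prove that each transposition has degree $-1$.

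Next I would identify a single transposition, say the swap $x_1\leftrightarrow x_2$, with a reflection up to orthogonal conjugation. In the corresponding coordinate plane it acts as the reflection across the diagonal $\{x_1=x_2\}$, so it is conjugate in $O(n+1)$ to the single-coordinate reflection $r(x_1,\dots,x_{n+1})=(-x_1,x_2,\dots,x_{n+1})$; that is, $T_j=A\,r\,A^{-1}$ for some orthogonal $A$. Since $A|_{S^n}$ is a homeomorphism, $\deg(A|_{S^n})\,\deg(A^{-1}|_{S^n})=\deg(\id{S^n})=1$, so multiplicativity forces $\deg(T_j|_{S^n})=\deg(r|_{S^n})$. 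Invoking the classical computation that a single-coordinate reflection of $S^n$ has degree $-1$ (see \cite{hatcher}), I obtain $\deg(T_j|_{S^n})=-1$, whence $\deg(\tau)=(-1)^n$.

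The only genuinely nontrivial input is the base value $\deg(r|_{S^n})=-1$; everything else is bookkeeping resting on the multiplicativity and homotopy invariance of the degree established earlier. I would simply cite this standard fact rather than reprove it, though it can be recovered by a suspension argument that reduces to the degree $-1$ of the flip of $S^0$.
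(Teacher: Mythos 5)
Your proof is correct, but it organizes the computation differently from the paper. You factor the cyclic shift into $n$ plain coordinate transpositions, note that each transposition is a hyperplane reflection conjugate in $O(n+1)$ to the single-coordinate reflection $\rho_1$, and conclude that each factor has degree $-1$, giving $(-1)^n$ uniformly with no case distinction. The paper instead factors $\tau$ as $\theta_1\circ\cdots\circ\theta_n$ for $n$ even and $\rho_1\circ\theta_1\circ\cdots\circ\theta_n$ for $n$ odd, where each $\theta_i$ is a rotation by $\frac{\pi}{2}$ in a coordinate plane (a \emph{signed} transposition); the rotations have degree $1$ because they are path-connected to the identity, so the sign $(-1)^n$ is carried entirely by the presence or absence of the single reflection $\rho_1$. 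Both arguments rest on the same two inputs --- multiplicativity of the degree and $\deg(\rho_1)=-1$ --- but yours additionally uses conjugation-invariance of the degree (which follows from multiplicativity, as you note), while the paper's uses homotopy-invariance via the path-connectedness of the rotation group. Your version avoids the parity case split and makes it transparent that the answer is just the sign of the permutation (equivalently, the determinant of the orthogonal map); the paper's version avoids having to argue that all hyperplane reflections share the same degree. Either is a complete proof.
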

\begin{proof}
 As auxiliary maps, we define $\rho_1$ the reflexion along the first coordinate, and $\theta_i$ the rotation of angle $\frac\pi2$ in the oriented plane generated by the $i^\textrm{th}$ and the $(i+1)^\textrm{th}$ coordinates. More explicitly, we have $\rho_1(x_1,x_2,\ldots,x_{n+1}) = (-x_1,x_2,\ldots,x_{n+1})$ and
 $$
 \theta_i(x_1,\ldots,x_{i-1},x_i,x_{i+1},x_{i+2}\ldots,x_{n+1})=(x_1,\ldots,x_{i-1},-x_{i+1},x_i,x_{i+2},\ldots,x_{n+1}).
 $$
 It is then directly computed that
$$
\tau=\left\{
  \begin{array}{ll}
    \theta_1\circ\cdots\circ\theta_{n}&\textrm{for $n$ even},\\
    \rho_1\circ\theta_1\circ\cdots\circ\theta_{n}&\textrm{for $n$ odd}.
  \end{array}\right.
$$
It is easily seen that $\deg(\rho_1)=-1$, cf.\ \cite[Section 2.2, Property (e), p.~134]{hatcher}. Moreover, all rotations are path-connected to the identity map and hence they have degree $1$ by the same codomain $\Z$ argument as in the proof of Lemma~\ref{lem:ObstructionLemma}. Combined with the multiplicativity of the degree, this proves the statement. 
\end{proof}

\subsection{The eigenvalue problem}\label{sec:Eigenvalue_auxiliary_facts}

First we give the following well-known fact. 
\begin{lem}\label{lem:Krasnoselskii_by_scaling}
	Let $w \in W_0^{1,p}(\Omega)$ be such that $w = w_1 +\dots+ w_k$, where $w_i$ and $w_j$ have disjoint supports for $i \neq j$ and each $w_i \in \mathcal{S}_p$. 
	Then 
	$$
	\mathcal{C}_k := 
	\biggl\{
	\sum_{i=1}^{k} \alpha_i w_i\,\big|\, \sum_{i=1}^{k} |\alpha_i|^p = 1
	\biggr\} \subset \mathcal{S}_p,
	$$
	$\mathcal{C}_k$ is symmetric and compact, and $\gamma(\mathcal{C}_k) = k$. 
	Moreover,
	$$
	\max_{u \in \mathcal{C}_k} \int_{\Omega} |\nabla u|^p \, dx 
	\leq 
	\max\Bigl\{\int_{\Omega} |\nabla w_1|^p \, dx, \dots, \int_{\Omega} |\nabla w_k|^p \, dx\Bigr\}.
	$$
	In particular, if $w$ is an eigenfunction of the $p$-Laplacian on $\Omega$ associated to an eigenvalue $\lambda$, and $w$ has at least $k$ nodal domains, then 
	$$
	\lambda_k(p;\Omega) \leq \max\limits_{u \in \mathcal{C}_k} \int_{\Omega} |\nabla u|^p \, dx  \leq \lambda.
	$$
\end{lem}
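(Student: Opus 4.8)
The plan is to verify the listed properties of $\mathcal{C}_k$ one by one, using throughout that pairwise disjoint supports reduce all the relevant integral identities to ``block-diagonal'' sums, and then to deduce the eigenvalue statement by applying the first part to suitably normalized restrictions of $w$ to its nodal domains.

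First I would extract the algebraic consequences of the disjoint-support hypothesis. For $u=\sum_{i=1}^k\alpha_i w_i$, at almost every point of $\Omega$ at most one summand, and at most one of the gradients $\nabla w_i$, is nonzero; hence
\[
\|u\|_{L^p(\Omega)}^p=\sum_{i=1}^k|\alpha_i|^p\,\|w_i\|_{L^p(\Omega)}^p=\sum_{i=1}^k|\alpha_i|^p=1,
\]
which gives $\mathcal{C}_k\subset\mathcal{S}_p$, and likewise
\[
\int_\Omega|\nabla u|^p\,dx=\sum_{i=1}^k|\alpha_i|^p\int_\Omega|\nabla w_i|^p\,dx\leq\Bigl(\max_i\int_\Omega|\nabla w_i|^p\,dx\Bigr)\sum_{i=1}^k|\alpha_i|^p,
\]
which is the claimed energy bound after maximizing over $\mathcal{C}_k$. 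Symmetry of $\mathcal{C}_k$ is immediate, since replacing every $\alpha_i$ by $-\alpha_i$ preserves the constraint $\sum_i|\alpha_i|^p=1$.

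The key point is the genus equality, and here I would show that $\mathcal{C}_k$ is odd-homeomorphic to $S^{k-1}$. Since the $w_i$ are nonzero with pairwise disjoint supports, they are linearly independent, so the linear map $\alpha\mapsto\sum_i\alpha_i w_i$ is a continuous injection of $\R^k$ onto a finite-dimensional subspace of $W_0^{1,p}(\Omega)$, hence a homeomorphism onto its image; being linear, it is odd. It sends the $\ell^p$-unit sphere $\Sigma=\{\alpha\in\R^k:\sum_i|\alpha_i|^p=1\}$ homeomorphically onto $\mathcal{C}_k$, which in passing establishes that $\mathcal{C}_k$ is compact. Composing with the odd radial homeomorphism $\Sigma\to S^{k-1}$, $\alpha\mapsto\alpha/|\alpha|$, I obtain an odd homeomorphism $\Psi:S^{k-1}\to\mathcal{C}_k$. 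Its inverse is a continuous odd map into $S^{k-1}$, giving $\gamma(\mathcal{C}_k)\leq k$; conversely, any odd map $\mathcal{C}_k\to S^{k-2}$ would yield, after precomposition with $\Psi$, an odd map $S^{k-1}\to S^{k-2}$, contradicting the Borsuk--Ulam theorem (Remark~\ref{rem:Borsuk_Ulam_classic}). Hence $\gamma(\mathcal{C}_k)=k$.

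Finally, for the eigenvalue corollary I would pick $k$ nodal domains $N_1,\dots,N_k$ of the eigenfunction $w$ and set $v_i:=w\,\chi_{N_i}$; these belong to $W_0^{1,p}(\Omega)$ and have pairwise disjoint supports. Testing the weak form of \eqref{eq:D} with $\phi=v_i$, which is legitimate since $v_i$ coincides with $w$ on $N_i$ and vanishes elsewhere, yields $\int_\Omega|\nabla v_i|^p\,dx=\lambda\,\|v_i\|_{L^p(\Omega)}^p$. Thus the normalized functions $w_i:=v_i/\|v_i\|_{L^p(\Omega)}\in\mathcal{S}_p$ each satisfy $\int_\Omega|\nabla w_i|^p\,dx=\lambda$, and applying the first part of the lemma produces $\mathcal{C}_k\in\Gamma_k(p)$ with $\max_{u\in\mathcal{C}_k}\int_\Omega|\nabla u|^p\,dx\leq\lambda$; the definition \eqref{highereigenvalues} then gives $\lambda_k(p;\Omega)\leq\lambda$. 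The only genuinely topological step is the genus lower bound, but it reduces cleanly to Borsuk--Ulam through the explicit odd homeomorphism $\Psi$; everything else is the routine support-splitting bookkeeping.
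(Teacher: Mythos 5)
Your proof is correct and follows essentially the same route as the paper: the non-topological claims are handled by the same disjoint-support bookkeeping, and the genus equality is obtained exactly as in the paper via an odd homeomorphism $\mathcal{C}_k\cong S^{k-1}$ (upper bound) combined with the Borsuk--Ulam theorem (lower bound). The only cosmetic difference is your choice of homeomorphism (radial projection of the $\ell^p$-sphere rather than the paper's explicit map $\alpha_i\mapsto|\alpha_i|^{\frac{p}{2}-1}\alpha_i$), and you spell out the nodal-domain testing argument that the paper declares trivial.
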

\begin{proof}
	Since all the statements are trivial, we will prove, for the sake of completeness, only that $\gamma(\mathcal{C}_k) = k$; see~\cite[Proposition~7.7]{rabinowitz}. Note first that there exists an odd homeomorphism $f$ between $\mathcal{C}_k$ and $S^{k-1}$ given by 
	$$
	f\biggl(\sum_{i=1}^{k} \alpha_i w_i\biggr) = \left(|\alpha_1|^{\frac{p}{2}-1}\alpha_1,\dots,|\alpha_k|^{\frac{p}{2}-1}\alpha_k\right).
	$$ 
	This implies that $\gamma(\mathcal{C}_k) \leq k$. If we suppose that $\gamma(\mathcal{C}_k) = n < k$, then there exists a continuous odd map $g:\mathcal{C}_k \to S^{n-1}$. However, the composition $g \circ f^{-1}$ is odd and maps $S^{k-1}$ into $S^{n-1}$ which contradicts the classical Borsuk-Ulam Theorem, cf.\ Remark~\ref{rem:Borsuk_Ulam_classic}.
	Thus, $\gamma(\mathcal{C}_k) = k$.
\end{proof}

Now we generalize the construction of eigenvalues $\tau_k(p;B^N)$ and corresponding symmetric eigenfunctions given in \cite{anoopdrabeksasi} to domains of revolution. 
Let us introduce the usual spherical coordinates in $\mathbb{R}^N$:
\begin{align*}
x_1 &= r \cos \theta_1,\\
x_2 &= r \sin \theta_1 \cos \theta_2,\\
&\cdots \\
x_{N-1} &= r \sin \theta_1 \sin \theta_2 \dots \sin \theta_{N-2} \cos \theta_{N-1},\\
x_{N} &= r \sin \theta_1 \sin \theta_2 \dots \sin \theta_{N-2} \sin \theta_{N-1},
\end{align*}
where $r \in [0,+\infty)$, $(\theta_1, \dots, \theta_{N-2}) \in [0, \pi]^{N-2}$ and $\theta_{N-1} \in [0, 2\pi)$. 
We say that $\Omega \subset \mathbb{R}^N$, $N \geq 2$, is a bounded \textit{domain of $N-l$ revolutions}, if $\Omega$ is a bounded domain and there exists a set $\mathcal{O} \subset [0,+\infty) \times [0, \pi]^{l-1}$ with $l \in \{1, \dots, N-1\}$ such that
\begin{equation}\label{eq:domain_of_revolution}
\Omega = 
\Bigl\{
x \in \mathbb{R}^N \,\big|\, (r,\theta_1,\dots,\theta_{l-1}) \in \mathcal{O},~ (\theta_{l},\dots,\theta_{N-2}) \in [0,\pi]^{N-l-1}, ~ \theta_{N-1} \in [0, 2\pi)
\Bigr\}. 
\end{equation}
Note that the latter two constraints describe a unit sphere $S^{N-l}$. 
Moreover, if $l=1$, then $\Omega$ is radially symmetric.

For any $k \in \mathbb{N}$ consider $2k$ wedges of $\Omega$ defined as (cf.\ Figure~\ref{Fig}) 
\begin{equation}\label{eq:spherical_wedge}
\mathcal{W}_i(k) := 
\Bigl\{
x \in \Omega \,\big|\, \frac{(i-1) \pi}{k} < \theta_{N-1} < \frac{i \pi}{k}
\Bigr\},
\quad 
i \in \{1, \dots, 2k\}.
\end{equation}

\begin{figure}[!h]
	\centering
	\includegraphics[width=0.4\linewidth]{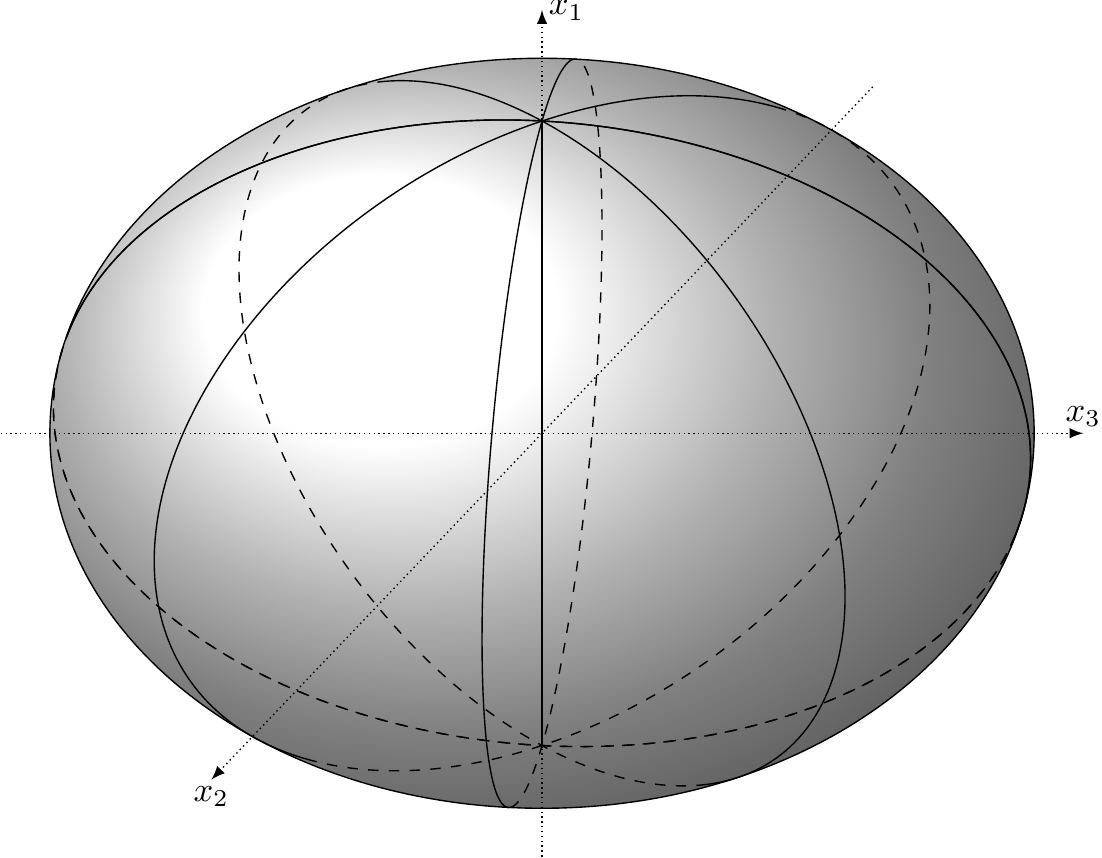}
	\caption{Partitioning of an ellipsoid $\Omega \subset \mathbb{R}^3$ on eight wedges $\mathcal{W}_1(8), \dots, \mathcal{W}_8(8)$.
	(The drawing is based on \cite{trzeciak}.)}
	\label{Fig}
\end{figure}

Let $v \in W_0^{1,p}(\mathcal{W}_1(k))$ be a first eigenfunction of the $p$-Laplacian on $\mathcal{W}_1(k)$ and $\lambda_1(p;\mathcal{W}_1(k))$ be the associated first eigenvalue. 
Hereinafter, we assume that $v$ is extended by zero outside of its support. 
We define
\begin{equation}\label{definitiontau} 
\tau_k(p;\Omega) := \lambda_1(p;\mathcal{W}_1(k)).
\end{equation}
Let $R_\omega(x)$ be the rotation of $x \in \mathbb{R}^N$ on the angle of measure $\omega \in \R$ with respect to $\theta_{N-1}$, that is,
$$
R_\omega(x) = (x_1,\dots,x_{N-2}, r \sin \theta_1 \dots \sin \theta_{N-2} \cos (\theta_{N-1}+\omega), r \sin \theta_1 \dots \sin \theta_{N-2} \sin (\theta_{N-1}+\omega)).
$$
Denote by $v_\omega \in W_0^{1,p}(R_\omega(\mathcal{W}_1(k)))$ the corresponding rotation of $v$, that is,
\begin{equation}\label{eq:rotation}
v_\omega(x) = v(R_{-\omega}(x))
\quad \text{for all } x \in R_\omega(\mathcal{W}_1(k)).
\end{equation}
Consider the function $\Psi_k \in W_0^{1,p}(\Omega)$ given by
\begin{equation}\label{eq:Psi_k}
\Psi_k = v - v_{\frac{\pi}{k}} + v_{\frac{2\pi}{k}} - \dots - v_{\frac{(2k-1)\pi}{k}} \equiv \sum\limits_{i=1}^{2k} (-1)^{i-1} v_{\frac{(i-1) \pi}{k}}.
\end{equation}
\begin{lem}\label{lem:symmetric_eigenvalue}
	$\Psi_k$ is an eigenfunction of the $p$-Laplacian on $\Omega$ associated to the eigenvalue $\tau_k(p;\Omega)$.
\end{lem}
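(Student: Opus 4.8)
The plan is to verify directly that $\Psi_k$ is a weak solution of \eqref{eq:D} with $\lambda=\tau_k(p;\Omega)$. Two things must be checked: that $\Psi_k\in W_0^{1,p}(\Omega)$, and that the weak identity $\int_\Omega|\nabla\Psi_k|^{p-2}\nabla\Psi_k\cdot\nabla\phi\,dx=\tau_k(p;\Omega)\int_\Omega|\Psi_k|^{p-2}\Psi_k\phi\,dx$ holds for all $\phi$ in a dense subset of $W_0^{1,p}(\Omega)$, say $\phi\in C_c^\infty(\Omega)$. The first point is routine: each rotated piece $v_{(i-1)\pi/k}$, being a first eigenfunction on $\mathcal{W}_i(k)$, lies in $W_0^{1,p}(\mathcal{W}_i(k))$ and hence, extended by zero, in $W_0^{1,p}(\Omega)$; since the wedges are pairwise disjoint and each piece has zero trace on the separating spokes, the finite sum $\Psi_k$ belongs to $W_0^{1,p}(\Omega)$. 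Moreover, by rotation invariance of both $\Omega$ and $\Delta_p$, every wedge $\mathcal{W}_i(k)=R_{(i-1)\pi/k}(\mathcal{W}_1(k))$ is congruent to $\mathcal{W}_1(k)$, so $\lambda_1(p;\mathcal{W}_i(k))=\tau_k(p;\Omega)$ and each $\pm v_{(i-1)\pi/k}$ solves $-\Delta_p u=\tau_k(p;\Omega)|u|^{p-2}u$ classically in the interior of $\mathcal{W}_i(k)$. Thus $\Psi_k$ already satisfies the equation inside every open wedge, and the only real content is the matching across the interfaces $\Gamma_i=\{\theta_{N-1}=i\pi/k\}$.

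The geometric input I would isolate first is a reflection symmetry of $v$. Because $\mathcal{O}$ in \eqref{eq:domain_of_revolution} does not constrain $\theta_{N-1}$, the wedge $\mathcal{W}_1(k)$ is invariant under the reflection $\beta$ across its angular bisector $\{\theta_{N-1}=\pi/(2k)\}$; since $\Delta_p$ commutes with isometries and the first eigenfunction is unique up to scaling and of one sign, $v\circ\beta=v$. A short computation with the spherical coordinate $\theta_{N-1}$, using this symmetry together with the alternating signs in \eqref{eq:Psi_k}, then shows that $\Psi_k$ is \emph{odd} across each interface, i.e.\ $\Psi_k\circ\sigma_i=-\Psi_k$, where $\sigma_i$ is the reflection of $\Omega$ across $\Gamma_i$ (this $\sigma_i$ preserves $\Omega$, again because $\Omega$ is a domain of revolution). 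Here the parity of the number $2k$ of wedges is essential: it is exactly what makes the alternating pattern of signs close up consistently as $\theta_{N-1}$ runs once around $[0,2\pi)$.

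To finish I would fold a test function onto a single wedge. For $\phi\in C_c^\infty(\Omega)$, set $\tilde\phi_j:=\phi\circ R_{(j-1)\pi/k}$ and $\Phi:=\sum_{j=1}^{2k}(-1)^{j-1}\tilde\phi_j$ on $\mathcal{W}_1(k)$. Changing variables wedge by wedge via the rotations and using the orthogonality of the rotations together with the oddness of $\xi\mapsto|\xi|^{p-2}\xi$, both integrals above collapse to $\int_{\mathcal{W}_1(k)}|\nabla v|^{p-2}\nabla v\cdot\nabla\Phi\,dx$ and $\tau_k(p;\Omega)\int_{\mathcal{W}_1(k)}|v|^{p-2}v\,\Phi\,dx$ respectively. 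Since $v\circ\beta=v$, these two integrals are unchanged if $\Phi$ is replaced by its $\beta$-symmetrization $\Phi_{\mathrm{sym}}:=\tfrac12(\Phi+\Phi\circ\beta)$, so it suffices to test the first-eigenfunction equation for $v$ on $\mathcal{W}_1(k)$ against $\Phi_{\mathrm{sym}}$; this gives the desired identity provided $\Phi_{\mathrm{sym}}\in W_0^{1,p}(\mathcal{W}_1(k))$.

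The main obstacle, and the crux of the whole lemma, is precisely this last membership, equivalently the vanishing of the trace of $\Phi_{\mathrm{sym}}$ on the two angular faces of $\mathcal{W}_1(k)$ (on the part of $\partial\mathcal{W}_1(k)$ lying in $\partial\Omega$ it is automatic, since $\phi$ has compact support and the rotations preserve $\partial\Omega$). Writing out the traces of $\Phi$ on the faces $\{\theta_{N-1}=0\}$ and $\{\theta_{N-1}=\pi/k\}$ as alternating sums of the values of $\phi$ along the $2k$ spokes, one sees --- again using that $2k$ is even, so that the wraparound term at $\theta_{N-1}=2\pi$ coincides with the one at $\theta_{N-1}=0$ but reappears with the opposite sign --- that the face-trace of $\Phi$ is exchanged, up to an overall sign, by $\beta$; hence the traces of $\Phi$ and $\Phi\circ\beta$ cancel and $\Phi_{\mathrm{sym}}$ vanishes on $\partial\mathcal{W}_1(k)$. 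Alternatively one can argue without folding: integrate by parts in each wedge and use $\Psi_k\circ\sigma_i=-\Psi_k$ to match the conormal derivatives $|\nabla\Psi_k|^{p-2}\partial_\nu\Psi_k$ across each $\Gamma_i$, so the interface boundary terms cancel in pairs; this is equivalent but requires $C^1$-regularity of $v$ up to the flat faces and a separate check that the lower-dimensional edge set contributes nothing.
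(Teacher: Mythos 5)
Your proposal is correct and follows essentially the same route as the paper: the key input in both cases is that, by simplicity of the first eigenvalue on a wedge, the reflection of $v$ across an interface coincides with its rotation (equivalently, your $v\circ\beta=v$), after which the conclusion follows from the standard reflection argument. The paper simply delegates that second step to the proof of \cite[Theorem~1.2]{anoopdrabeksasi}, whereas you carry it out explicitly by folding test functions onto a single wedge and symmetrizing; your verification that $\Phi_{\mathrm{sym}}$ has vanishing trace on the angular faces (using that the number of wedges is even) is the correct way to close that argument.
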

\begin{proof}
	Note that $R_{\frac{i\pi}{k}}(\mathcal{W}_j(k))=\mathcal{W}_{m}(k)$, where $i \in \mathbb{N}$, $j, m \in \{1,\dots,2k\}$ and $m \equiv j+i \ \ (\text{mod } 2k)$.
	Moreover, if we denote by $\sigma_{H_i}(\mathcal{W}_j(k))$ the reflection of $\mathcal{W}_j(k)$ with respect to the hyperplane $H_i := \{x \in \mathbb{R}^N\,\big|\, \theta_{N-1} = \frac{i\pi}{k}\}$, then it is not hard to see that $\sigma_{H_i}(\mathcal{W}_j(k)) = \mathcal{W}_{s}(k)$, where $i \in \mathbb{N}$, $j, s \in \{1,\dots,2k\}$ and $s \equiv 2i-j+1 \ \ (\text{mod } 2k)$.
	At the same time, since the $p$-Laplacian is invariant under orthogonal changes of variables, we obtain that the rotation $v_{\frac{\pi}{k}}$ of $v$ is a first eigenfunction of the $p$-Laplacian on $\mathcal{W}_2(k)$. Analogously, if $w$ is a reflection of $v$ with respect to the hyperplane $H_1$, then $w$ is also a first eigenfunction on $\mathcal{W}_2(k)$. Since the first eigenvalue is simple, we conclude that $w \equiv v_{\frac{\pi}{k}}$. Now, the proof of \cite[Theorem~1.2]{anoopdrabeksasi} based on reflection arguments can be applied with no changes to conclude the desired fact.	
\end{proof}

\begin{rem}
	Let $(\Psi_k)_\omega$ be obtained by rotating $\Psi_k$ on the angle of measure $\omega \in \R$ with respect to $\theta_{N-1}$, see~\eqref{eq:rotation}. Since the $p$-Laplacian and $\Omega$ are invariant under such rotation, we see that $(\Psi_k)_\omega$ is also an eigenfunction associated to $\tau_k(p;\Omega)$. 
\end{rem}

\section{Proofs of the main results}\label{sec:proofs}

The proofs of Theorem~\ref{thm:main1} and Propositions \ref{prof:detalization_of_main_theorem_in_2D_case} and \ref{prop:main2} will be achieved in several steps. 
First, in Proposition~\ref{prop:first_inequality}, we prove the inequalities \eqref{eq:l2k+1tk1_excentric} of Proposition~\ref{prop:main2}. 
The inequalities \eqref{eq:l2k+1tk1} of Theorem~\ref{thm:main1}, being a partial case of \eqref{eq:l2k+1tk1_excentric}, will be hence covered. 
Second, in Proposition~\ref{prop:eq:ln+1<t2}, we prove the inequalities \eqref{eq:ln+1<t2} of Theorem~\ref{thm:main1}. The method of proof carries over to the inequalities \eqref{eq:ln+1<t2_excentric} of Proposition~\ref{prop:main2}, see Proposition~\ref{prop:ln+1<t2_excentric}. 
Finally, we give the proof of Proposition \ref{prof:detalization_of_main_theorem_in_2D_case}. 

\begin{prop}\label{prop:first_inequality}
	Let $\Omega \subset \mathbb{R}^N$ be a bounded domain of $N-l$ revolutions defined by \eqref{eq:domain_of_revolution}, where $N \geq 2$ and $l \in \{1,\dots,N-1\}$. 
	For any $p>1$ and $k \in \mathbb{N}$ it holds
	\begin{equation}\label{eq:l2k+1tk}
	 \lambda_{2k+1}(p;\Omega) \leq \tau_k(p;\Omega).
	\end{equation}
\end{prop}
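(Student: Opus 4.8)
The plan is to exhibit a symmetric compact set $X \subset \mathcal{S}_p$ with $\gamma(X) \geq 2k+1$ on which the Dirichlet energy never exceeds $\tau_k(p;\Omega)$; the definition \eqref{highereigenvalues} of $\lambda_{2k+1}(p;\Omega)$ then yields \eqref{eq:l2k+1tk} at once. The building block is the symmetric eigenfunction $\Psi_k$ from Lemma~\ref{lem:symmetric_eigenvalue}, whose nodal domains are exactly the $2k$ wedges $\mathcal{W}_1(k), \dots, \mathcal{W}_{2k}(k)$. Writing $w_i := (-1)^{i-1} v_{\frac{(i-1)\pi}{k}}$, each $w_i$ is, up to sign, a rotation of the normalized first eigenfunction on $\mathcal{W}_1(k)$, so $w_i \in \mathcal{S}_p$, the $w_i$ are disjointly supported, and $\int_\Omega |\nabla w_i|^p\,dx = \tau_k(p;\Omega)$ for every $i$. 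The scaling set $\mathcal{C}_{2k}$ of Lemma~\ref{lem:Krasnoselskii_by_scaling} already has genus $2k$, which gives only $\lambda_{2k}(p;\Omega) \leq \tau_k(p;\Omega)$; to gain one extra unit of genus I will thicken $\mathcal{C}_{2k}$ by the rotations $R_\omega$, exploiting that $\Omega$ is a domain of revolution, so that each $R_\omega$ maps $\Omega$ onto itself and preserves both the $L^p$-norm and the Dirichlet energy.

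Concretely, I let $h \colon S^{2k-1} \to \mathcal{C}_{2k}$ be the odd homeomorphism of Lemma~\ref{lem:Krasnoselskii_by_scaling} and define the continuous map
\begin{equation*}
f \colon S^{2k-1} \times [0,1] \to \mathcal{S}_p, \qquad f(a,t) := \bigl(h(a)\bigr)_{\frac{t\pi}{k}},
\end{equation*}
setting $X := f(S^{2k-1}\times[0,1])$. Since $h$ is odd, $f(-a,t) = -f(a,t)$, so $X$ is symmetric; it is compact, being the continuous image of a compact set; and $X \subset \mathcal{S}_p$ because rotations preserve the $L^p$-norm. Moreover, for $u = \sum_i \alpha_i w_i \in \mathcal{C}_{2k}$ the disjointness of supports gives $\int_\Omega |\nabla u|^p\,dx = \sum_i |\alpha_i|^p\, \tau_k(p;\Omega) = \tau_k(p;\Omega)$, and rotation invariance of the energy propagates this to all of $X$, so $\max_{u\in X}\int_\Omega|\nabla u|^p\,dx = \tau_k(p;\Omega)$. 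It therefore remains only to prove that $\gamma(X) \geq 2k+1$.

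For this I will invoke Lemma~\ref{lem:ObstructionLemma} with $n = 2k-1$. At $t=0$ the map $f(\cdot,0) = h$ is odd. The key computation is the effect of the rotation by $\frac{\pi}{k}$: since $(w_i)_{\frac{\pi}{k}} = (-1)^{i-1} v_{\frac{i\pi}{k}} = -w_{i+1}$ for $1 \leq i \leq 2k-1$ and $(w_{2k})_{\frac{\pi}{k}} = -v = -w_1$, one finds $f(a,1) = \bigl(h(a)\bigr)_{\frac{\pi}{k}} = -\sum_j \alpha_{j-1} w_j$ (indices modulo $2k$). In terms of $h$ this reads $f(\cdot,1) = h\circ g = f(\cdot,0)\circ g$, where $g\colon S^{2k-1}\to S^{2k-1}$ is the signed cyclic shift $g(a)_j = -a_{j-1}$, that is, $g = \iota\circ\tau$ with $\tau$ the permutation map of Lemma~\ref{lem:Degrees}. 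By Lemma~\ref{lem:Degrees}, $\deg(\tau) = (-1)^{2k-1} = -1$, while the antipodal map $\iota$ on the odd-dimensional sphere $S^{2k-1}$ has degree $(-1)^{2k} = 1$, whence $\deg(g) = -1 \neq 1$. Thus condition (b) of Lemma~\ref{lem:ObstructionLemma} holds, so there is no odd map from $X$ to $S^j$ for $j \leq 2k-1$; equivalently $\gamma(X) \geq 2k+1$, which completes the argument.

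The routine parts are the verification that $X$ is a symmetric compact subset of $\mathcal{S}_p$ and the energy bound, both resting on rotation invariance of $\Omega$ and of the energy. The crux of the argument, and the step I expect to be the most delicate, is the identification $f(\cdot,1) = f(\cdot,0)\circ g$: one must recognize that rotating the \emph{whole} scaling family by the single angle $\frac{\pi}{k}$ cyclically permutes (with a sign) the nodal pieces, and then read off the resulting self-map $g$ of $S^{2k-1}$ precisely enough to compute its degree via Lemma~\ref{lem:Degrees}. It is exactly the fact that $\deg(g)\neq 1$ that upgrades the genus from the ``obvious'' value $2k$ to $2k+1$, thereby sharpening $\lambda_{2k}(p;\Omega)\leq\tau_k(p;\Omega)$ into \eqref{eq:l2k+1tk}.
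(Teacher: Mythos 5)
Your proposal is correct and follows essentially the same route as the paper's proof: the same homotopy $f$ on $S^{2k-1}\times[0,1]$ obtained by rotating the scaling family of wedge eigenfunctions, the same appeal to Lemma~\ref{lem:ObstructionLemma}(b) together with Lemma~\ref{lem:Degrees}, and the same energy computation. The only cosmetic difference is that you keep the alternating signs $(-1)^{i-1}$ in the building blocks, so your shift map is $g=\iota\circ\tau$ rather than $\tau$; since $\deg(\iota)=(-1)^{2k}=1$ on $S^{2k-1}$, this changes nothing and your degree computation $\deg(g)=-1\neq 1$ is correct.
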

\begin{proof}
	Denote by $v$ a first eigenfunction of the $p$-Laplacian on the wedge $\mathcal{W}_1(k)$ defined by~\eqref{eq:spherical_wedge} and assume that $v$ is normalized such that $\|v\|_{L^p(\mathcal{W}_1(k))} = 1$. 
	Then $v$ generates the eigenfunction $\Psi_k$ of the $p$-Laplacian on $\Omega$, as defined by~\eqref{eq:Psi_k}, associated to the eigenvalue $\tau_k(p;\Omega)$, see Lemma~\ref{lem:symmetric_eigenvalue}. Note that $\Psi_k$ has exactly $2k$ nodal domains. 
	Consider the set
	$$
	\mathcal{A} := 
	\biggl\{
	\sum_{i=1}^{2k} \alpha_i\,v_{\gamma+\frac{(i-1)\pi}{k}}\,\big|\, \sum_{i=1}^{2k} |\alpha_i|^p = 1,\, \gamma \in \R 
	\biggr\},
	$$
	where $v_\varphi$ is obtained by rotating $v$ on the angle of measure $\varphi \in \R$ with respect to $\theta_{N-1}$, see~\eqref{eq:rotation}. 
	It is not hard to see that $\mathcal{A}$ is symmetric, compact and $\mathcal{A} \subset \mathcal{S}_p$. 
	Consider the continuous map $f:S^{2k-1} \times [0,1] \to \mathcal{A}$ defined by 
	$$
	f\left(\left(|\alpha_1|^{\frac{p}{2}-1}\alpha_1,\ldots,|\alpha_{2k}|^{\frac{p}{2}-1}\alpha_{2k}\right),t\right)= \sum_{i=1}^{2k} \alpha_i \, v_{\frac{t \pi}{k}+ \frac{(i-1)\pi}{k}},
	\quad \text{where } \sum_{i=1}^{2k} |\alpha_i|^p = 1.
	$$ 
	Then, $f$ clearly satisfies  $f_{|S^{2k-1}\times\{0\}}\circ\iota=\iota\circ f_{|S^{2k-1}\times\{0\}}$ and, in view of~\eqref{eq:Psi_k}, $f_{|S^{2k-1}\times\{1\}}=f_{|S^{2k-1}\times\{0\}}\circ \tau$, where $\iota$ and $\tau$ are defined in Section \ref{sec:HomAlg}. Therefore, it follows from assertion $(b)$ of Lemma \ref{lem:ObstructionLemma} and Lemma \ref{lem:Degrees} that there is no odd map from $\mathcal{A}$ to $S^{n}$ for any $n \leq 2k-1$, which implies that $\gamma(\mathcal{A}) \geq 2k+1$.
	Thus, $\mathcal{A} \in \Gamma_{2k+1}(p)$. 

	Noting now that for any $u \in \mathcal{A}$ it holds
	$$
	\int_{\Omega} |\nabla u|^p \, dx = \sum_{i=1}^{2k}
	|\alpha_i|^p \int_{\Omega} \left|\nabla v_{\gamma+\frac{(i-1)\pi}{k}}\right|^p  dx = 
	\sum_{i=1}^{2k}
	|\alpha_i|^p \, \tau_k(p;\Omega) = \tau_k(p;\Omega),
	$$
	we conclude the desired inequality:
	$$ 
	\lambda_{2k+1}(p;\Omega) \leq \max_{u \in \mathcal{A}} \int_{\Omega} |\nabla u|^p \, dx = 
	\tau_k(p;\Omega).
	$$
\end{proof}

\begin{rem}\label{rem:2k+1_linear}
	In the linear case $p=2$, the inequality \eqref{eq:l2k+1tk} can be easily obtained using the Courant-Fisher variational principle \eqref{eq:eigenvalue_linear}. 
	Indeed, since the Laplacian is rotation invariant and $\Omega$ is a domain of revolution, for any $i \geq 1$ we can find at least two linearly independent symmetric eigenfunctions associated to $\tau_i(2;\Omega)$, one is a rotation of another. Therefore, taking a first eigenfunction and also two linearly independent eigenfunctions for every $i \in \{1,\dots,k\}$, we produce a $(2k+1)$-dimensional subspace of $W_0^{1,2}(\Omega)$ which leads to the desired inequality via \eqref{eq:eigenvalue_linear}. 
	Let us also remark that, in view of Pleijel's Theorem, the inequality \eqref{eq:l2k+1tk} is strict for sufficiently large $k \in \mathbb{N}$, see, e.g., \cite{helffer}.
\end{rem}

\begin{rem}\label{remark:connection_between_multiplicity_and_nodal_set}
	Let, for simplicity, $N=2$, $\Omega = B^2$ and $k=1$. Assume that there exists a second eigenfunction $\phi$ of the $p$-Laplacian on $\Omega$ which is antisymmetric with respect to the rotation of the angle $\pi$, that is, $\phi_{\pi} = -\phi$. (This happens, for instance, when the nodal set is a diameter or a ``yin-yang''-type curve.) Then the proof of Proposition~\ref{prop:first_inequality} works with no changes considering $\phi^+$ or $\phi^-$ instead of $v$, which yields $\lambda_2(p;B^2) = \lambda_3(p;B^2)$. Therefore, the knowledge about structure of the nodal set of higher eigenfunctions plays an important role for our arguments.
\end{rem}

It is of independent interest to prove the inequalities~\eqref{eq:ln+1<t2} of Theorem~\ref{thm:main1} up to $\lambda_N(p;\Omega)$, since the proof uses only rotations of $\Psi_1$ to increase the Krasnosel'ski\u{\i} genus.
\begin{prop}\label{prop:ln<t2}
	Let $\Omega \subset \mathbb{R}^N$ be a bounded radially symmetric domain, $N \geq 2$. 
	Then for any $p>1$ it holds 
	\begin{equation*}\label{eq:lNleqt1}
	\lambda_{N}(p;\Omega) \leq \tau_1(p;\Omega).
	\end{equation*}	
\end{prop}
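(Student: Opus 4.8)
The plan is to realize the full orbit of the antisymmetric eigenfunction $\Psi_1$ from \eqref{eq:Psi_k} under the rotation group as a single admissible competitor. Concretely, I would produce a symmetric compact set $\mathcal{A}\subset\mathcal{S}_p$, consisting entirely of rotated copies of $\Psi_1$, and show that $\gamma(\mathcal{A})\ge N$, i.e.\ $\mathcal{A}\in\Gamma_N(p)$. Since every member of $\mathcal{A}$ is an isometric copy of $\Psi_1$, the Dirichlet integral is constant on $\mathcal{A}$; after rescaling $\Psi_1$ to unit $L^p(\Omega)$-norm, the eigenvalue equation gives $\int_\Omega|\nabla u|^p\,dx=\tau_1(p;\Omega)$ for every $u\in\mathcal{A}$, so $\mathcal{A}\in\Gamma_N(p)$ yields $\lambda_N(p;\Omega)\le\max_{u\in\mathcal{A}}\int_\Omega|\nabla u|^p\,dx=\tau_1(p;\Omega)$ at once.

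The crucial preliminary step is to establish that $\Psi_1$ is invariant under the subgroup $SO(N-1)\subset SO(N)$ of rotations fixing the $x_N$-axis. The half-domain $\mathcal{W}_1(1)=\Omega\cap\{x_N>0\}$ is invariant under this subgroup because $\Omega$ is radially symmetric; since the first eigenvalue $\lambda_1(p;\mathcal{W}_1(1))=\tau_1(p;\Omega)$ is simple and its eigenfunction $v$ has a strict sign, any rotate $v\circ Q^{-1}$ with $Q\in SO(N-1)$ is again a positive first eigenfunction and must coincide with $v$. Writing $S$ for the rotation by $\pi$ in the $(x_{N-1},x_N)$-plane, one has $\Psi_1\circ S=-\Psi_1$ directly from \eqref{eq:Psi_k}, and a short conjugation computation ($S^{-1}Q^{-1}S$ again fixes $e_N$) transfers the invariance of $v$ from the upper to the lower nodal domain, giving $SO(N-1)$-invariance of $\Psi_1$.

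Next I would define, for each direction $\nu\in S^{N-1}$, the rotate $u_\nu:=\Psi_1\circ R^{-1}$ where $R\in SO(N)$ is any rotation with $Re_N=\nu$. The $SO(N-1)$-invariance makes $u_\nu$ independent of the choice of $R$ (two such rotations differ by an element of the stabilizer $SO(N-1)$ of $e_N$), so the map $\bar\Phi\colon S^{N-1}\to\mathcal{S}_p$, $\bar\Phi(\nu)=u_\nu$, is well defined and continuous; set $\mathcal{A}:=\bar\Phi(S^{N-1})$. To see that $\bar\Phi$ is odd, I would write $-\nu=R_0Se_N$ with $R_0e_N=\nu$ and use $\Psi_1\circ S=-\Psi_1$ to get $u_{-\nu}=\Psi_1\circ S^{-1}\circ R_0^{-1}=-\Psi_1\circ R_0^{-1}=-u_\nu$. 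Thus $\mathcal{A}$ is compact (continuous image of $S^{N-1}$), symmetric, and contained in $\mathcal{S}_p$ (rotations preserve the $L^p$-norm); since $\bar\Phi$ is an odd map from $S^{N-1}$ into $\mathcal{A}$, monotonicity of the Krasnosel'ski\u{\i} genus under odd maps gives $\gamma(\mathcal{A})\ge\gamma(S^{N-1})=N$, which closes the argument as described in the first paragraph.

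I expect the main obstacle to be precisely the well-definedness and oddness of $\bar\Phi$, both of which rest entirely on the $SO(N-1)$-invariance of $\Psi_1$; making that invariance rigorous via simplicity of the first eigenvalue on the half-domain is the technical heart of the proof (for $N=2$ the subgroup is trivial and the statement reduces to the fact that $\Psi_1$ has two nodal domains). One could instead avoid constructing $\bar\Phi$ explicitly and apply Lemma~\ref{lem:ObstructionLemma} to a homotopy that continuously rotates the nodal hyperplane, as in Proposition~\ref{prop:first_inequality}, but the direct genus-monotonicity argument via the odd surjection $S^{N-1}\to\mathcal{A}$ is cleaner here.
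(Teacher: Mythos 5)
Your proposal is correct and follows essentially the same route as the paper: both take the set $\mathcal{A}$ of rotated copies of the normalized antisymmetric eigenfunction $(v_x-v_{-x})/\sqrt[p]{2}$, parametrized by the unit normal $x\in S^{N-1}$ to its nodal hyperplane, deduce $\gamma(\mathcal{A})\geq N$ from an odd identification of $\mathcal{A}$ with $S^{N-1}$, and then evaluate the (constant) Dirichlet energy $\tau_1(p;\Omega)$ on $\mathcal{A}$. The only cosmetic difference is the direction of the odd map: the paper constructs the odd homeomorphism $h:\mathcal{A}\to S^{N-1}$ by recovering $x$ from the nodal set, whereas you construct the odd parametrization $S^{N-1}\to\mathcal{A}$ (well-defined thanks to the $SO(N-1)$-invariance of $\Psi_1$, which you correctly identify as the technical point) and invoke monotonicity of the genus under odd maps.
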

\begin{proof}
	For any $x \in S^{N-1}$ we define 
	$$
	\Omega_x := \{z \in \Omega\,\big|\, \langle z, x \rangle > 0\}.
	$$
	Denote as $v_x$ the first eigenfunction on $\Omega_x$ such that $v_x > 0$ in $\Omega_x$ and $\|v_x\|_{L^p(\Omega_x)} = 1$, and extend it by zero outside of $\Omega_x$.
	Arguing as in Lemma~\ref{lem:symmetric_eigenvalue}, it can be deduced that $\frac{v_x - v_{-x}}{\sqrt[p]{2}}$ is an eigenfunction associated to $\tau_1(p;\Omega)$ for any $x \in S^{N-1}$.
	Consider the set
	$$
	\mathcal{A} := \Bigl\{\frac{v_x - v_{-x}}{\sqrt[p]{2}}\,\big|\, x \in S^{N-1} \Bigr\}.
	$$
	It is not hard to see that $\mathcal{A}$ is compact. Moreover, $\mathcal{A}$ is evidently symmetric and $\mathcal{A} \subset \mathcal{S}_p$.
	Note that $x$ is uniquely determined by the choice of $\frac{v_x-v_{-x}}{\sqrt[p]{2}}$ since $x$ corresponds to the unique unit normal vector of the nodal set which points to the nodal domain $\Omega_x$. 
	Therefore, taking $h:\mathcal{A} \to S^{N-1}$ defined by $h\left(\frac{v_x - v_{-x}}{\sqrt[p]{2}}\right) = x$, we deduce that $h$ is an odd homeomorphism, and hence $\gamma(\mathcal{A}) \leq N$. If we suppose that $\gamma(\mathcal{A}) < N$, then we get a contradiction as in the proof of Lemma~\ref{lem:Krasnoselskii_by_scaling}. Therefore, $\gamma(\mathcal{A}) = N$ and $\mathcal{A} \in \Gamma_N(p)$, and we conclude as in the proof of Proposition~\ref{prop:first_inequality}.
\end{proof}

To prove the whole chain of inequalities \eqref{eq:ln+1<t2} of Theorem~\ref{thm:main1}, we combine rotations of $\Psi_1$ with the scaling of its nodal components. 
\begin{prop}\label{prop:eq:ln+1<t2}
	Let $\Omega \subset \mathbb{R}^N$ be a bounded radially symmetric domain, $N \geq 2$. 
	Then for any $p>1$ it holds 
	\begin{equation*}
	 \lambda_{N+1}(p;\Omega) \leq \tau_1(p;\Omega).
	\end{equation*}
\end{prop}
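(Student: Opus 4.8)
The plan is to take the set of balanced antisymmetric functions from Proposition~\ref{prop:ln<t2} and \emph{enlarge} it by allowing the two nodal components to be scaled independently; the extra scaling freedom is exactly what lets me detect one more unit of Krasnosel'ski\u{\i} genus through Lemma~\ref{lem:ObstructionLemma}(a).

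First, as in Proposition~\ref{prop:ln<t2}, for each $x \in S^{N-1}$ I set $\Omega_x := \{z \in \Omega : \langle z,x\rangle > 0\}$ and let $v_x$ be the positive first eigenfunction on $\Omega_x$ with $\|v_x\|_{L^p(\Omega_x)} = 1$, extended by zero. Radial symmetry of $\Omega$ makes each $\Omega_x$ congruent to $\mathcal{W}_1(1)$, so $\lambda_1(p;\Omega_x) = \tau_1(p;\Omega)$, and $x \mapsto v_x$ is continuous into $W_0^{1,p}(\Omega)$ by simplicity of the first eigenvalue. Since $v_x$ and $v_{-x}$ have disjoint supports, I define
$$
\mathcal{A} := \Bigl\{a\,v_x + b\,v_{-x} \,\big|\, x \in S^{N-1},\ |a|^p + |b|^p = 1\Bigr\}.
$$
The routine checks are: $\mathcal{A}$ is compact, being the continuous image of the compact set $\{(a,b) : |a|^p+|b|^p = 1\}\times S^{N-1}$; it is symmetric; disjointness of supports gives $\|a v_x + b v_{-x}\|_{L^p(\Omega)}^p = |a|^p + |b|^p = 1$, so $\mathcal{A}\subset\mathcal{S}_p$; and, again by disjointness, $\int_\Omega |\nabla(a v_x + b v_{-x})|^p\,dx = (|a|^p + |b|^p)\,\tau_1(p;\Omega) = \tau_1(p;\Omega)$ for every element of $\mathcal{A}$.

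The core of the proof is the genus bound $\gamma(\mathcal{A}) \geq N+1$, which I would obtain from Lemma~\ref{lem:ObstructionLemma}(a) with $n = N-1$. I fix a continuous path $t \mapsto (a(t),b(t))$ on the $\ell^p$-circle $\{|a|^p+|b|^p = 1\}$ from $(2^{-1/p},-2^{-1/p})$ to $(2^{-1/p},2^{-1/p})$, staying in the quarter $a \geq 0$ (for instance through $(1,0)$), and put
$$
f(x,t) := a(t)\,v_x + b(t)\,v_{-x}, \qquad f : S^{N-1}\times[0,1] \to \mathcal{A}.
$$
Using $v_{-(-x)} = v_x$, one computes $f(-x,0) = 2^{-1/p}(v_{-x}-v_x) = -f(x,0)$, so $f_{|S^{N-1}\times\{0\}}$ is odd, while $f(-x,1) = 2^{-1/p}(v_{-x}+v_x) = f(x,1)$, so $f_{|S^{N-1}\times\{1\}}$ is even. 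Lemma~\ref{lem:ObstructionLemma}(a) then forbids odd maps $\mathcal{A}\to S^k$ for all $k \leq N-1$, which is precisely $\gamma(\mathcal{A})\geq N+1$, i.e.\ $\mathcal{A}\in\Gamma_{N+1}(p)$. Together with the Rayleigh identity above this yields $\lambda_{N+1}(p;\Omega) \leq \max_{u\in\mathcal{A}}\int_\Omega|\nabla u|^p\,dx = \tau_1(p;\Omega)$.

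The point that deserves the most care is the genus bound, since with rotations alone the set of Proposition~\ref{prop:ln<t2} is homeomorphic to $S^{N-1}$ and has genus exactly $N$. The scaling is what creates enough room inside $\mathcal{A}$ to deform the odd map $x\mapsto 2^{-1/p}(v_x - v_{-x})$ to the even map $x\mapsto 2^{-1/p}(v_x + v_{-x})$, a deformation that is impossible within the smaller set; and it is exactly the evenness of this endpoint, fed into Lemma~\ref{lem:ObstructionLemma}(a) rather than into the plain Borsuk--Ulam theorem, that pushes the genus bound from $N$ up to $N+1$. Beyond this, I would only verify the continuity of $t\mapsto(a(t),b(t))$ and of $x\mapsto v_x$ into $W_0^{1,p}(\Omega)$, both of which are straightforward and already implicit in Proposition~\ref{prop:ln<t2}.
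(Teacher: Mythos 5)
Your proposal is correct and follows essentially the same route as the paper's proof: the same set $\mathcal{A}$ of independently scaled pairs $\alpha_1 v_x + \alpha_2 v_{-x}$, the same homotopy along a path on the $\ell^p$-circle from $(2^{-1/p},-2^{-1/p})$ to $(2^{-1/p},2^{-1/p})$, and the same application of Lemma~\ref{lem:ObstructionLemma}(a) to get $\gamma(\mathcal{A}) \geq N+1$, followed by the disjoint-support Rayleigh-quotient computation. The only difference is that you spell out the verification of oddness/evenness of the endpoint maps and the energy identity, which the paper leaves implicit by referring back to Propositions~\ref{prop:ln<t2} and~\ref{prop:first_inequality}.
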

\begin{proof}
	Using the notation $v_x$ from Proposition~\ref{prop:ln<t2}, we define the set 
	$$
	\mathcal{A} := \left\{ \alpha_1\,v_x+\alpha_2\,v_{-x}\,\big|\, |\alpha_1|^p + |\alpha_2|^p = 1,\,x \in S^{N-1} \right\}.
	$$
	As before, $\mathcal{A} \subset \mathcal{S}_p$ and $A$ is symmetric and compact.
	Let $\gamma: [0,1] \to \{z \in \mathbb{R}^2: |z_1|^p + |z_2|^p = 1\}$ be a path from $\Big(\frac{1}{\sqrt[p]{2}},-\frac{1}{\sqrt[p]{2}}\Big)$ to $\Big(\frac{1}{\sqrt[p]{2}},\frac{1}{\sqrt[p]{2}}\Big)$ and denote by $\gamma_1(t)$ and $\gamma_2(t)$ the first and the second component of $\gamma(t)$, respectively. 
	The continuous map $f:S^{N-1} \times [0,1] \to \mathcal{A}$ defined by $f(x,t)=\gamma_1(t)v_x+\gamma_2(t)v_{-x}$ clearly satisfies $f_{|S^{N-1}\times\{0\}}\circ\iota=\iota\circ f_{|S^{N-1}\times\{0\}}$ and $f_{|S^{N-1}\times\{1\}}\circ\iota=f_{|S^{N-1}\times\{1\}}$, where $\iota$ is defined in Section~\ref{sec:HomAlg}. Then, it follows from assertion $(a)$ of Lemma \ref{lem:ObstructionLemma} that there is no odd map from $\mathcal{A}$ to $S^{n-1}$ for any $n \leq N$, and hence $\gamma(\mathcal{A})\geq N+1$. 
	Thus $\mathcal{A} \in \Gamma_{N+1}(p)$, and we conclude as in the proof of Proposition~\ref{prop:first_inequality}.
\end{proof}

\begin{cor}
	If $\lambda_2(p;\Omega)=\tau_1(p;\Omega)$, then the second eigenvalue has multiplicity at least $N$.
\end{cor}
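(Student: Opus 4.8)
The plan is to read off the conclusion directly from the chain \eqref{eq:ln+1<t2} established in Proposition~\ref{prop:eq:ln+1<t2}, so essentially no new work is required beyond bookkeeping with the definition \eqref{eq:multiplicity} of multiplicity. First I would invoke Proposition~\ref{prop:eq:ln+1<t2}, which gives
$$
\lambda_2(p;\Omega) \leq \lambda_3(p;\Omega) \leq \dots \leq \lambda_{N+1}(p;\Omega) \leq \tau_1(p;\Omega).
$$
Under the standing hypothesis $\lambda_2(p;\Omega)=\tau_1(p;\Omega)$, the first and last terms of this chain coincide, and since the chain is nondecreasing this forces every intermediate inequality to be an equality. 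Hence
$$
\lambda_2(p;\Omega) = \lambda_3(p;\Omega) = \dots = \lambda_{N+1}(p;\Omega),
$$
a block of $N$ mutually equal variational eigenvalues (the indices $2,3,\dots,N+1$).

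Second, I would recall from the introduction that the first eigenvalue is strictly separated from the second, i.e.\ $\lambda_1(p;\Omega) < \lambda_2(p;\Omega)$, cf.\ \cite{garciaazoreroperal} and \cite{anane}. Consequently the maximal run of equal variational eigenvalues containing $\lambda_2(p;\Omega)$ cannot absorb $\lambda_1(p;\Omega)$ and therefore begins precisely at index $2$; by the previous step this run comprises at least the $N$ consecutive indices $2,3,\dots,N+1$. Comparing with the definition \eqref{eq:multiplicity} (taking $l=2$ and $m=N$, so that $\lambda_{l-1}=\lambda_1 < \lambda_2 = \dots = \lambda_{N+1}$), we conclude that the multiplicity of the second eigenvalue is at least $N$.

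There is no genuine obstacle in this argument; the one point to keep in mind is that we obtain only a lower bound and not the exact multiplicity, since the equal block could in principle extend beyond index $N+1$. In particular we do \emph{not} assert the strict inequality $\lambda_{N+1}(p;\Omega) < \lambda_{N+2}(p;\Omega)$, which would be needed to pin $m$ down to exactly $N$; as the statement asks only for a lower bound, this suffices.
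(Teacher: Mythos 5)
Your argument is correct and is exactly the (implicit) reasoning behind the corollary in the paper: the chain $\lambda_2\leq\dots\leq\lambda_{N+1}\leq\tau_1$ from Proposition~\ref{prop:eq:ln+1<t2} collapses to equalities under the hypothesis, and $\lambda_1<\lambda_2$ guarantees the equal block starts at index $2$, giving $m\geq N$ in the sense of \eqref{eq:multiplicity}. Nothing further is needed.
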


The inequalities \eqref{eq:ln+1<t2_excentric} of Proposition~\ref{prop:main2} can be proved in much the same way as Proposition~\ref{prop:eq:ln+1<t2}. Let us briefly sketch the proof.  
\begin{prop}\label{prop:ln+1<t2_excentric}
	Let $\Omega \subset \mathbb{R}^N$ be a bounded domain of $N-l$ revolutions, where $N \geq 2$ and $l \in \{1,\dots,N-1\}$.
	Then for any $p>1$ it holds 
	$$
	\lambda_{N-l+2}(p;\Omega) \leq \tau_1(p;\Omega)
	$$
\end{prop}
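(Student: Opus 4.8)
The plan is to follow the proof of Proposition~\ref{prop:eq:ln+1<t2} essentially verbatim, the only genuine modification being that the sphere of admissible cutting directions must be replaced by the unit sphere of the \emph{revolution subspace}, whose dimension is exactly what is needed to produce genus $N-l+2$ in place of $N+1$.

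First I would set $V := \operatorname{span}(e_l,\dots,e_N) \subset \mathbb{R}^N$, the $(N-l+1)$-dimensional subspace carrying the $S^{N-l}$ factor appearing in~\eqref{eq:domain_of_revolution}; its unit sphere is $S^{N-l}$. For each $x \in S^{N-l} \subset V$ I would introduce the half-domain $\Omega_x := \{z \in \Omega \,\big|\, \langle z,x\rangle > 0\}$ and let $v_x$ be the first eigenfunction on $\Omega_x$ with $v_x > 0$ and $\|v_x\|_{L^p(\Omega_x)} = 1$, extended by zero. The essential point is that the profile coordinates $(r,\theta_1,\dots,\theta_{l-1})$ are invariant under the orthogonal group $O(N-l+1)$ acting on $V$, so $\Omega$ itself is invariant under this group; since $\Omega_{e_N}$ coincides with the wedge $\mathcal{W}_1(1)$ defining $\tau_1(p;\Omega)$, every $\Omega_x$ with $x \in S^{N-l}$ is an $O(V)$-image of $\mathcal{W}_1(1)$, whence $\lambda_1(p;\Omega_x) = \tau_1(p;\Omega)$ and $v_x$ is a rigid rotation of $v_{e_N}$. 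Reflecting across $\{\langle z,x\rangle = 0\}$, which is again an element of $O(V)$, and invoking simplicity of the first eigenvalue as in Lemma~\ref{lem:symmetric_eigenvalue}, I would conclude that $\frac{v_x - v_{-x}}{\sqrt[p]{2}}$ is an eigenfunction associated to $\tau_1(p;\Omega)$ for every $x \in S^{N-l}$.

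Then I would set
$$
\mathcal{A} := \left\{\alpha_1 v_x + \alpha_2 v_{-x} \,\big|\, |\alpha_1|^p + |\alpha_2|^p = 1,\, x \in S^{N-l}\right\},
$$
which is symmetric and compact and satisfies $\mathcal{A} \subset \mathcal{S}_p$ because $v_x$ and $v_{-x}$ have disjoint supports. Exactly as in Proposition~\ref{prop:eq:ln+1<t2}, I would fix a path $\gamma$ on the $\ell^p$-unit circle from $\big(\tfrac{1}{\sqrt[p]{2}}, -\tfrac{1}{\sqrt[p]{2}}\big)$ to $\big(\tfrac{1}{\sqrt[p]{2}}, \tfrac{1}{\sqrt[p]{2}}\big)$ and define $f : S^{N-l} \times [0,1] \to \mathcal{A}$ by $f(x,t) = \gamma_1(t) v_x + \gamma_2(t) v_{-x}$, so that $f_{|S^{N-l}\times\{0\}} \circ \iota = \iota \circ f_{|S^{N-l}\times\{0\}}$ and $f_{|S^{N-l}\times\{1\}} \circ \iota = f_{|S^{N-l}\times\{1\}}$. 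Assertion $(a)$ of Lemma~\ref{lem:ObstructionLemma}, applied with $n = N-l$, then rules out any odd map from $\mathcal{A}$ into $S^{k}$ for $k \leq N-l$, so $\gamma(\mathcal{A}) \geq N-l+2$ and $\mathcal{A} \in \Gamma_{N-l+2}(p)$. Since disjointness of supports gives $\int_\Omega |\nabla u|^p \, dx = (|\alpha_1|^p + |\alpha_2|^p)\,\tau_1(p;\Omega) = \tau_1(p;\Omega)$ for every $u \in \mathcal{A}$, the minimax definition~\eqref{highereigenvalues} yields $\lambda_{N-l+2}(p;\Omega) \leq \tau_1(p;\Omega)$.

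The one genuinely new point, and the step I expect to demand the most care, is the passage from the full rotational symmetry of the ball to the partial symmetry of a domain of revolution: I must verify that restricting the cutting directions $x$ to the subsphere $S^{N-l} \subset V$ is both \emph{necessary} (for a generic $x \in S^{N-1}$ the half-domain $\Omega_x$ need not be congruent to $\mathcal{W}_1(1)$, so $\lambda_1(p;\Omega_x)$ may differ from $\tau_1$) and \emph{sufficient} (the $O(N-l+1)$-invariance of $\Omega$ forces congruence, hence the common value $\tau_1$, for every $x \in S^{N-l}$). Everything else reduces to the dimension bookkeeping of the already-established homotopy obstruction.
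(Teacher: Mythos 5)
Your proposal is correct and follows essentially the same route as the paper: restrict the cutting directions to the sphere $S^{N-l}$ of the revolution subspace, note that each resulting half-domain $\Omega_x$ is congruent to $\mathcal{W}_1(1)$ so that $\lambda_1(p;\Omega_x)=\tau_1(p;\Omega)$, and then run the homotopy from the odd combination to the even one exactly as in Proposition~\ref{prop:eq:ln+1<t2}. Your half-space description $\{\,z\in\Omega \mid \langle z,x\rangle>0\,\}$ with $x$ in the revolution subspace coincides (as an open set) with the paper's angular-hemisphere definition of $\Omega_x$, and your explicit $O(N-l+1)$-congruence argument merely fills in what the paper dismisses as ``not hard to obtain''.
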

\begin{proof}
	Take any $x \in S^{N-l}$ and define a hemisphere 
	$$
	S^{N-l}_x := \{y \in S^{N-l}\,\big|\, \langle x,y \rangle > 0\}.
	$$
	We parametrize $S^{N-l}_x$ in spherical coordinates by angles $(\theta_l, \dots, \theta_{N-1})$ and define 
	$$
	\Omega_x := \{z \in \Omega\,\big|\, (\theta_l, \dots, \theta_{N-1}) \in S^{N-l}_x\}.
	$$
	Denote as $v_x$ the first eigenfunction on $\Omega_x$ such that $v_x > 0$ in $\Omega_x$ and $\|v_x\|_{L^p(\Omega_x)} = 1$. 
	In view of the symmetries of $\Omega$ (see \eqref{eq:domain_of_revolution}) it is not hard to obtain that $v_x$ is associated to the eigenvalue $\lambda = \tau_1(p;\Omega)$ for any $x \in S^{N-l}$. Consider the set
	$$
	\mathcal{A} := \{ \alpha_1\,v_x+\alpha_2\,v_{-x}\,\big|\, |\alpha_1|^p + |\alpha_2|^p =
	1,\,x \in S^{N-l} \}.
	$$
	The rest of the proof goes along the same lines as in Proposition~\ref{prop:eq:ln+1<t2}.
\end{proof}

\bigskip
\noindent
\textbf{Proof of Proposition~\ref{prof:detalization_of_main_theorem_in_2D_case}.}
1) In view of \eqref{eq:ln+1<t2} with $N=2$, to justify \eqref{eq:detalization_of_l2k+1tk_in_2D_case} it is sufficient to show that
	$$
	\lambda_\ominus(p) < \lambda_\circledcirc(p) \quad \text{for any } p>1.
	$$
	This fact was fully proved in \cite{benediktdrabekgirg}, although the case $p \in (1,1.01)$ is not explicitly stated in the text. 
	For the sake of completeness, we collect the arguments from \cite{benediktdrabekgirg} to explain the proof. 
	
	Denote by $B^+$ a half-disc of a unit disc $B^2$. By definition we have $\lambda_\ominus(p) = \lambda_1(p;B^+)$. Translation invariance of the $p$-Laplacian and the strict domain monotonicity of its first eigenvalue (cf.\ \cite[Proposition~4]{benediktdrabekgirg}) imply that $\lambda_\ominus(p) < \lambda_1(p;B^2_{1/2})$, where $B^2_{1/2}$ is a disc of radius $1/2$.
	On the other hand, it is known that $\lambda_\circledcirc(p) = \lambda_1\bigl(p;B^2_{\nu_1(p)/\nu_2(p)}\bigr)$, where $B^2_{\nu_1(p)/\nu_2(p)}$ is a disc of radius $\nu_1(p)/\nu_2(p)$, and $\nu_1(p)$, $\nu_2(p)$ are the first two positive roots of a (unique) solution of the Cauchy problem
	\begin{equation}\label{eq:radial}
	\left\{
	\begin{aligned}
	&-(r|u'|^{p-2}u')' = r |u|^{p-2}u \quad \text{in } (0, +\infty),\\
	&u(0) = 1 \quad u'(0)=0,
	\end{aligned}
	\right.
	\end{equation}
	see \cite[Lemmas~5.2 and 5.3]{delpinomanasevich}. 
	Therefore, if the inequality
	\begin{equation}\label{eq:2nu1<nu2}
	2\nu_1(p)<\nu_2(p)
	\end{equation}
	holds for all $p>1$, then the strict domain monotonicity yields the desired conclusion:
	$$
	\lambda_\ominus(p) < \lambda_1(p;B^2_{1/2}) < 
	\lambda_1\bigl(p;B^2_{\nu_1(p)/\nu_2(p)}\bigr)
	= \lambda_\circledcirc(p).
	$$
	The inequality \eqref{eq:2nu1<nu2} is, in fact, the main objective of \cite{benediktdrabekgirg}. 
	In the interval $p \in [1.01, 226]$, \eqref{eq:2nu1<nu2} was proved in \cite[Proposition~7]{benediktdrabekgirg} via a self-validated numerical integration of \eqref{eq:radial}. For $p > 226$, \eqref{eq:2nu1<nu2} was proved in \cite[Proposition~13]{benediktdrabekgirg} by obtaining analytical bounds for $\nu_1(p)$ and $\nu_2(p)$. In the rest case $p \in (1, 1.01)$ it was shown that $\lambda_\ominus(p) \leq 3.5$, see the proof of \cite[Proposition~6]{benediktdrabekgirg}. This fact was enough to apply the proof of \cite[Theorem~6.1]{parini} and get nonradiality of the second eigenfunction. 
	However, as a byproduct of the proof of \cite[Theorem~6.1]{parini}, we know also that $\lambda_\circledcirc(p) > 3.5$ for $p \in (1, 1.1)$, which yields $\lambda_\ominus(p) < \lambda_\circledcirc(p)$ for $p \in (1, 1.01)$. 
	Thus, summarizing the above facts, we conclude that $\lambda_\ominus(p) < \lambda_\circledcirc(p)$ for all $p>1$. 
		
	2) The first two inequalities in \eqref{eq:l4<l5<lrad} follow from \eqref{eq:l2k+1tk1} by taking $k=2$. The last inequality in \eqref{eq:l4<l5<lrad} was proved in \cite[Theorem~1.2]{bobkovdrabek}.	
\qed

\section{Final remarks and open questions}\label{sec:open_problems}

The results of this paper can be applied also to the singular case $p=1$, which must be treated separately. In \cite{littig} the authors defined a sequence of variational eigenvalues and proved that they can be approximated by the corresponding eigenvalues of the $p$-Laplacian as $p \to 1$. The second variational eigenvalue of the $1$-Laplacian can be characterized geometrically, as a consequence of \cite[Theorem 2.4]{littig} and \cite[Theorem 5.5]{parini} (see also \cite{bobkovparini}). In particular, if $\Omega = B^2$ is a disc, it holds $\lambda_2(1;B^2)=\lambda_\ominus(1;B^2)$, and therefore
\[ \lambda_2(1;B^2)=\lambda_3(1;B^2)=\lambda_\ominus(1;B^2)\]
by reasoning as in Proposition \ref{prop:first_inequality}. That is, the second eigenvalue of the $1$-Laplacian on a disc has multiplicity (in the sense of \eqref{eq:multiplicity}) at least $2$. 

The limit case $p = \infty$ can be also considered in terms of a geometric characterization of the corresponding first and second eigenvalues. 
It is known from \cite{julindman1999} and \cite{julind2005} that 
\[
\lim\limits_{p \to \infty} \lambda_1(p;\Omega)^{\frac{1}{p}} = \frac{1}{R_1}
\quad \text{and} \quad 
\lim\limits_{p \to \infty} \lambda_2(p;\Omega)^{\frac{1}{p}} = \frac{1}{R_2},
\]
where $R_1$ is the radius of a maximal ball inscribed in $\Omega$, and $R_2$ is the maximal radius of two equiradial disjoint balls inscribed in $\Omega$. 
Let $B^N$ be a ball of radius $R$. Then we deduce from \eqref{eq:ln+1<t2} that
$$
\lim_{p \to \infty} \lambda_2(p;B^N)^{\frac{1}{p}} = \dots = 
\lim_{p \to \infty} \lambda_{N+1}(p;B^N)^{\frac{1}{p}} = 	
\lim_{p \to \infty} \tau_1(p;B^N)^{\frac{1}{p}}
\equiv 
\lim_{p \to \infty} \lambda_1(p;\mathcal{W}_1(2))^{\frac{1}{p}} = \frac{2}{R}.
$$

\medskip
We are left with several open problems.
\begin{enumerate}
	\item By analogy with the linear case, it would be interesting to show the optimality of \eqref{eq:ln+1<t2}, namely whether the inequality \[\tau_1(p;\Omega)<\lambda_{N+2}(p;\Omega),\] where $\Omega$ is a radially symmetric bounded domain, holds true.
	\item To prove \eqref{eq:l2k+1tk1} we used the scaling of nodal components of symmetric eigenfunctions corresponding to $\tau_k(p;\Omega)$ together with their rotation with respect to the angle $\theta_{N-1}$. However, it is not hard to see that for $N \geq 3$, symmetric eigenfunctions can be also rotated with respect to all the angles $\theta_i$, where $i \in \{1,\dots,N-1\}$ if $\Omega$ is radial, and $i \in \{2,\dots,N-1\}$ if $\Omega$ is a general domain of revolution. This observation leads to the conjecture that for every $k \geq 1$ there exists $j \geq 2$ such that 
	$$
	\lambda_{2k}(p;\Omega) \leq \dots \leq \lambda_{2k+j}(p;\Omega) \leq \tau_k(p;\Omega).
	$$
	The proof might be achieved by showing the nonexistence of maps $S^{n_1} \times S^{n_2} \to S^{m}$, for suitable $n_1$, $n_2$, $m \in \N$, which are odd in the first variable (corresponding to the normalization constraint) and satisfy some additional conditions given by symmetries of eigenfunctions.
	\item In the spirit of the previous question, it is natural to study a generalization of \eqref{eq:l2k+1tk1} where the upper bound is given by eigenvalues whose associated eigenfunctions are invariant under the action of other symmetry groups. 
	\item Is it possible to obtain multiplicity results for domains $\Omega$ which satisfy different symmetry properties, for instance if $\Omega$ is a square? In this case, on the one hand, numerical evidence \cite{yaozhou} supports the conjecture that $\lambda_2(p;\Omega)<\lambda_3(p;\Omega)$ if $p\neq 2$, unlike the linear case where equality trivially holds. 
	On the other hand, if the nodal set of a second eigenfunction $\varphi_{2,p}$ is a middle line or a diagonal of the square, as indicated again in \cite{yaozhou}, then there is another second eigenfunction linearly independent with $\varphi_{2,p}$ obtained by rotating $\varphi_{2,p}$ by an angle of $\frac{\pi}{2}$. 
\end{enumerate}

\bigskip
\noindent
{\bf Acknowledgments.}
The article was started during a visit of E.P. at the University of West Bohemia and was finished during a visit of V.B. at Aix-Marseille University. The authors wish to thank the hosting institutions for the invitation and the kind hospitality. 
V.B. was supported by the project LO1506 of the Czech Ministry of Education, Youth and Sports.

\end{document}